\newcommand{\rene}[1]{{\color{blue} \sf $\spadesuit\spadesuit\spadesuit$ Ren\'e: [#1]}}
\newcommand{\defi}[1]{\textsf{#1}} 
\newcommand\wt[1]{\widetilde{#1}}
\newtheorem{thm}{Theorem}
\newtheorem{defn}[thm]{Definition}
\newtheorem{cor}[thm]{Corollary}
\newtheorem{lem}[thm]{Lemma}
\newtheorem{prop}[thm]{Proposition}
\newtheorem{ex}[thm]{Example}
\newtheorem{rem}[thm]{Remark}
\newcommand{\ord}{v}
\newcommand{\p}{\mathbf{P}}
\newcommand{\Q}{\mathbf{Q}}
\newcommand{\Z}{\mathbf{Z}}
\newcommand{\OO}{\mathcal{O}}
\newcommand{\DD}{\mathcal{D}}
\newcommand{\F}{\mathbf{F}}
\newcommand{\E}{\rene{fix me}}
\newcommand{\EE}{\mathcal{E}}
\newcommand{\hatEE}{\widehat{\mathcal{E}}}
\newcommand{\Hom}{\mathrm{Hom}}
\newcommand{\mm}{\mathfrak{m}}
\newcommand{\Spec}{\operatorname{Spec}}
\newcommand{\wht}{\operatorname{wt}}
\newcommand{\Ext}{\operatorname{Ext}}
\newcommand{\sm}{\operatorname{sm}}
\newcommand{\cp}{\F_p}
\newcommand{\PGL}{\textnormal{PGL}}
\begin{document}

\title{On $p$-torsion of $p$-adic elliptic curves with additive reduction}
\author{Ren\'e Pannekoek}
\date{\today}

\maketitle

\section{Introduction}

\noindent
In this article, we fix a prime $p$. If $E/\Q_p$ is an elliptic curve with additive reduction, and one chooses for it a minimal Weierstrass equation over $\Z_p$:
$$
y^2 + a_1 xy + a_3 y = x^3 + a_2 x^2 + a_4 x + a_6,~~~~\textnormal{$a_i \in \Z_p$ for each $i$},
$$
then we denote by $E_0(\Q_p) \subset E(\Q_p)$ the subgroup of points that reduce to a non-singular point of the reduced curve. As is well-known, this subgroup does not depend on the choice of minimal Weierstrass equation.

The purpose of this note is to investigate the structure of $E_0(\Q_p)$ as a topological group. 

\begin{thm}
\label{main}
Let $E/\Q_p$ be an elliptic curve with additive reduction, such that it can be given by a minimal Weierstrass equation over $\Z_p$:
$$
y^2 + a_1 xy + a_3 y = x^3 + a_2 x^2 + a_4 x + a_6,
$$
where the $a_i$ are contained in $p\Z_p$ for each $i$. Then the group $E_0(\Q_p)$ is topologically isomorphic to $\Z_p$, except in the following four cases:
\begin{itemize}\itemsep=0pt
\item[(i)] $p=2$ and $a_1+a_3 \equiv 2 \pmod{4}$;
\item[(ii)] $p=3$ and $a_2 \equiv 6 \pmod{9}$;
\item[(iii)] $p=5$ and $a_4 \equiv 10 \pmod{25}$;
\item[(iv)] $p=7$ and $a_6 \equiv 14 \pmod{49}$.
\end{itemize}
In each of the cases (i)-(iv), $E_0(\Q_p)$ is topologically isomorphic to $p\Z_p \times \cp$, where $\cp$ has the discrete topology.
\end{thm}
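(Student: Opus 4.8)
The plan is to read off $E_0(\Q_p)$ from the standard filtration $E_0(\Q_p)\supseteq E_1(\Q_p)\supseteq E_2(\Q_p)\supseteq\cdots$ by kernels of reduction on the minimal Weierstrass model $\mathcal{W}/\Z_p$, whose special fibre is $y^2=x^3$ with smooth locus $\mathbb{G}_a$ (via $(x,y)\mapsto x/y$), so that $E_0(\Q_p)/E_1(\Q_p)\cong\mathbb{G}_a(\F_p)\cong\F_p$. First I would observe that the formal group law $F(S,T)=S+T-a_1ST-\cdots$ of $\hat E$, all of whose higher coefficients are integer polynomials in the $a_i$, reduces mod $p$ to the additive law, so $[p]_{\hat E}(T)\equiv 0\pmod p$; writing $[p]_{\hat E}(T)=p\,g(T)$ with $g(T)=T+c_2T^2+\cdots\in\Z_p[[T]]$ gives $v_p([p]_{\hat E}(t))=1+v_p(t)$ for $0\ne t\in p\Z_p$. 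Hence $E_1(\Q_p)=\hat E(p\Z_p)$ is torsion-free, $[p]$ carries $E_n(\Q_p)$ isomorphically onto $E_{n+1}(\Q_p)$, and $E_1(\Q_p)$ is topologically isomorphic to $\Z_p$. From $0\to E_1(\Q_p)\to E_0(\Q_p)\to\F_p\to 0$ it follows that $E_0(\Q_p)$ is a topologically finitely generated $\Z_p$-module of rank one whose torsion injects into $\F_p$; thus $E_0(\Q_p)\cong\Z_p$ or $\cong\Z_p\times\F_p\cong p\Z_p\times\F_p$, the second case occurring exactly when $E_0(\Q_p)$ contains a point of order $p$, and equivalently exactly when $[p]P_0\in E_2(\Q_p)$ for one (hence any) $P_0\in E_0(\Q_p)$ reducing to a generator of $\F_p$.

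A point of order $p$ in $E_0(\Q_p)$ cannot lie in $E_1(\Q_p)$ and does not reduce to the cusp, so it reduces to a nonzero point of $\mathbb{G}_a(\F_p)$, which by $\tilde y^2=\tilde x^3$ forces both coordinates to be units; conversely any order-$p$ point with unit coordinates lies in $E_0(\Q_p)$. For $p$ odd I would detect this via the $p$-division polynomial $\psi_p(x)=\psi_p(x;a_1,\dots,a_6)\in\Z_p[x]$: its reduction mod $p$ is the $p$-division polynomial of $y^2=x^3$, which the usual recursion evaluates as $p\,x^{(p^2-1)/2}$, so $\psi_p(x)=p\,h_p(x)$ with $h_p$ monic of degree $N:=(p^2-1)/2$ and, writing $a_i=p\alpha_i$, $h_p(x)\equiv x^N+\sum_i\alpha_i\,(\partial_{a_i}\psi_p)(x)|_{a=0}\pmod p$. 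Taking (by Hensel) $P_0=(1,y_0)\in E_0(\Q_p)$ reducing to $(1,1)$ and computing $[p]P_0$ from $x([p]P_0)=\phi_p(P_0)/\psi_p(P_0)^2$, using $\psi_p(1)=p\,h_p(1)$ and $\phi_p(1)\equiv 1\pmod p$ (since $\phi_p(x)|_{a=0}=x^{p^2}$), one gets $v_p\!\big(x([p]P_0)\big)=-2-2v_p(h_p(1))$; hence $[p]P_0\in E_2(\Q_p)$, i.e. $E_0(\Q_p)\not\cong\Z_p$, exactly when $p\mid h_p(1)$.

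It remains to decide when $1+\sum_i\alpha_i\,(\partial_{a_i}\psi_p)(1)|_{a=0}\equiv 0\pmod p$. One checks that $(\partial_{a_i}\psi_p)(x)|_{a=0}\equiv 0\pmod p$ for odd $i$ (as $\psi_p$ is a polynomial in $x$ and the $b$-invariants and $\partial_{a_i}b_j$ vanishes at $a=0$). The decisive input is a congruence for division polynomials: $\psi_p(x)$ has, modulo $p$, $x$-degree at most $(p^2-p)/2$, so the $a_i$-linear coefficient, which lives in $x$-degree $(p^2-1-i)/2$, is divisible by $p$ whenever $i<p-1$. Since only $a_1,\dots,a_6$ occur, for $p\ge 11$ every $(\partial_{a_i}\psi_p)(x)|_{a=0}$ vanishes mod $p$, so $h_p\equiv x^N$ and $E_0(\Q_p)\cong\Z_p$. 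For $p\in\{3,5,7\}$ only $a_{p-1}$ can contribute, and a direct computation with the recursion gives $(\partial_{a_{p-1}}\psi_p)(1)|_{a=0}\equiv-\tfrac12\pmod p$ while the other even-index coefficients are $\equiv 0$, so $h_p(1)\equiv 1-\tfrac12\alpha_{p-1}\pmod p$, which vanishes exactly when $\alpha_{p-1}\equiv 2$, i.e. $a_{p-1}\equiv 2p\pmod{p^2}$ — precisely cases (ii), (iii), (iv).

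For $p=2$ one cannot complete the square, so I would argue directly: an order-$2$ point with unit coordinates has $x$-coordinate a unit root of $4x^3+b_2x^2+2b_4x+b_6$ (with $y=-(a_1x+a_3)/2$), and since $v_2(b_2)=2\Leftrightarrow a_1\equiv 2\pmod 4$, $v_2(b_6)=2\Leftrightarrow a_3\equiv 2\pmod 4$ and $v_2(2b_4)\ge 3$, a Newton-polygon and mod-$2$ reduction analysis of this cubic shows: if $a_1\equiv a_3\equiv 0\pmod 4$ there is no root of valuation $0$; if $a_1+a_3\equiv 2\pmod 4$ there is one lying in $\Q_2$; if $a_1\equiv a_3\equiv 2\pmod 4$ the valuation-$0$ part reduces to the irreducible $x^3+x^2+1\in\F_2[x]$, so no such root is rational. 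Hence $E_0(\Q_2)$ has a point of order $2$ iff $a_1+a_3\equiv 2\pmod 4$, which is case (i), and then $E_0(\Q_2)\cong 2\Z_2\times\F_2$. Assembling the cases yields the theorem. The step I expect to be the main obstacle is the division-polynomial congruence in the third paragraph (that $\psi_p\bmod p$ has $x$-degree at most $(p^2-p)/2$): it is exactly this that makes the low-weight coefficients $a_i$ with $i<p-1$ drop out and confines the exceptional behaviour to $p\le 7$, reflecting that the obstruction is controlled by a weight-$(p-1)$ quantity (a degenerate Hasse invariant); I would prove it from the structure of $\psi_p$ modulo $p$ (the kernel of Frobenius on the special fibre) or isolate it as a lemma. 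The formal-group input, the Newton-polygon analysis at $p=2$, and the explicit evaluations at $p\in\{3,5,7\}$ are then routine.
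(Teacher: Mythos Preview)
Your argument is essentially correct and reaches the same conclusions, but it follows a genuinely different route from the paper.

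The paper's key device is that, since all $a_i\in p\Z_p$, the formal group law $F(X,Y)$ converges on the whole of $\Z_p$ (not just $p\Z_p$); this yields a topological group isomorphism $\Psi:E_0(\Q_p)\stackrel{\sim}{\to}\widehat{E}(\Z_p)$, $(x,y)\mapsto -x/y$. The question of whether $[p]P_0\in E_2$ for $P_0\in E_0\setminus E_1$ then becomes whether $v_p([p](z))=1$ for a unit $z$, and this is read off from the formal $[p]$-series using only two elementary facts: the coefficient $b_n$ has weight $n-1$ in the $a_i$, and $p\mid b_n$ when $p\nmid n$. These force everything except the linear-in-$a_{p-1}$ part of $b_p$ to contribute $\ge p^2$, uniformly for all $p$. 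No appeal to the structure of $\psi_p\bmod p$ is needed, and $p=2$ requires no separate treatment.

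Your approach instead computes $v_p(x([p]P_0))$ via the division polynomial $\psi_p$, and for $p=2$ analyses the $2$-torsion cubic directly by Newton polygon. The logic is sound, and the weight bookkeeping that singles out $a_{p-1}$ is the same in spirit. The trade-off is the lemma you flag as the main obstacle --- that $\psi_p\bmod p$ has $x$-degree at most $(p^2-p)/2$. This is true (it encodes the inseparability of $[p]$ in characteristic $p$, with leading coefficient essentially the Hasse invariant), but it is a less elementary input than the paper's weight/divisibility facts about the formal $[p]$-series, which follow directly from the recursion. Your approach does have the virtue of tying the phenomenon explicitly to the Hasse invariant; the paper's is cleaner and more uniform. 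Note also that for $p=3,5$ you still need the direct computation to dispose of the even-index $a_i$ with $i>p-1$ (e.g.\ $a_4,a_6$ at $p=3$), as your degree bound only handles $i<p-1$; you say ``a direct computation'' covers this, which is fine but worth making explicit.
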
 

The proof of Theorem \ref{main} will be given in Section \ref{proofofmain}. The case $p>7$ of Theorem \ref{main} was also mentioned in \cite{SwD}.

We will say a few words about the idea of the proof. It is a standard fact from the theory of elliptic curves over local fields \cite[VII.6.3]{silverman} that $E_0(\Q_p)$ admits a canonical filtration
$$
E_0(\Q_p) \supset E_1(\Q_p) \supset E_2(\Q_p) \supset E_3(\Q_p) \supset \ldots,
$$
where for each $i \geq 1$ the quotient $E_i(\Q_p)/E_{i+1}(\Q_p)$ is isomorphic to $\cp$. The quotient $E_0(\Q_p)/E_{1}(\Q_p)$ is also isomorphic to $\cp$ by the fact that $E$ has additive reduction. One has a natural isomorphism of topological groups $j : E_2(\Q_p) \stackrel{\sim}{\rightarrow} p^2\Z_p$ given by the theory of formal groups. If $p>2$, the same theory even gives a natural isomorphism $j' : E_1(\Q_p) \stackrel{\sim}{\rightarrow} p\Z_p$. These isomorphisms identify $E_n(\Q_p)$ with $p^n\Z_p$ for all $n \geq 2$. The idea of the proof of theorem \ref{main} is to start from $j$ or $j'$ and, by extending its domain, to build up an isomorphism between $E_0(\Q_p)$ and either $\Z_p$ or $p\Z_p \times \cp$.

Rather than elliptic curves over $\Q_p$ with additive reduction, we consider the more general case of Weierstrass curves over $\Z_p$ whose generic fiber is smooth and whose special fiber is a cuspidal cubic curve. This allows more general results. Theorem \ref{main} is derived as a special case.

At the end of the note, we give examples for each prime $2 \leq p \leq 7$ of an elliptic curve $E/\Q$ with additive reduction at $p$ such that $E_0(\Q_p)$ contains a $p$-torsion point defined over $\Q$.

\section{Preliminaries}
\label{preliminaries}

\subsection{Preliminaries on Weierstrass curves}
\label{weier}

All proofs of facts recalled in this section can be found in \cite[Ch. IV, VII]{silverman}.

Let $K$ be a finite field extension of $\Q_p$ for some prime $p$, and let $v_K : K \rightarrow \Z \cup \{ \infty \}$ be its normalized valuation. Let $\OO_K$ be the ring of integers, $\mathfrak{m}_K$ its maximal ideal and $k$ its residue field. By a \defi{Weierstrass curve} over $\OO_K$ we mean a projective curve $\EE \subset \p^2_{\OO_K}$ defined by a Weierstrass equation
\begin{equation}
\label{Weiereq}
y^2 + a_1xy + a_3y = x^3+ a_2x^2 + a_4x + a_6,
\end{equation}
such that the generic fiber $\EE_K$ is an elliptic curve with $(0:1:0)$ as the origin. The coefficients $a_i$ are uniquely determined by $\EE$. The discriminant of $\EE$, denoted $\Delta_{\EE}$, is defined as in \cite[III.1]{silverman}. The curve $\EE$ is said to be minimal if $v_K(\Delta_{\EE})$ is minimal among $v_K(\Delta_{\EE'})$, where $\EE'$ ranges over the Weierstrass curves such that $\EE'_K \cong \EE_K$.

We will say that a Weierstrass curve $\EE/\OO_K$ has \defi{good reduction} when the special fiber $\EE_{k}$ is smooth, \defi{multiplicative reduction} when $\EE_{k}$ is nodal (i.e. there are two distinct tangent directions to the singular point), and \defi{additive reduction} when $\EE_{k}$ is cuspidal (i.e. one tangent direction to the singular point). A non-minimal Weierstrass curve has additive reduction.  The reduction type of an elliptic curve $E$ is defined to be the reduction type of a \defi{minimal Weierstrass model} of $E$ over $\OO_K$, which is a minimal Weierstrass curve $\EE/\OO_K$ such that $\EE_K \cong E$. By the fact that the minimal Weierstrass model of $E$ is unique up to $\OO_K$-isomorphism, this is well-defined.

We have $E(K) = \EE(K) = \EE(\OO_K)$ since $\EE$ is projective. Therefore, we have a reduction map $E(K) \rightarrow \EE(k)$ given by restricting an element of $\EE(\OO_K)$ to the special fiber. By $\EE_0(K)$ we denote the subgroup $\EE_0(K) \subset \EE(K)$ of points reducing to a non-singular point of the special fiber $\EE_k$. By $\EE_1(K) \subset \EE_0(K)$ we denote the \defi{kernel of reduction}, i.e. the points that map to the identity $0_{k}$ of $\EE(k)$. A more explicit definition of $\EE_1(K)$ is
\begin{equation}
\label{defE1}
\EE_1(K) = \left\{ (x,y) \in \EE(K) : v_K(x) \leq -2, v_K(y) \leq -3 \right\} \cup \{ 0_E \}.
\end{equation}
More generally, one defines subgroups $\EE_n(K) \subset \EE_0(K)$ as follows:
$$
\EE_n(K) = \left\{ (x,y) \in \EE(K) : v_K(x) \leq -2n, v_K(y) \leq -3n \right\} \cup \{ 0_E \}.
$$
We thus have an infinite filtration on the subgroup $\EE_1(K)$:
\begin{equation}
\label{filtE}
\EE_1(K) \supset \EE_2(K) \supset \EE_3(K) \supset \cdots
\end{equation}
For an elliptic curve $E/K$ and an integer $n \geq 0$, we define $E_n(K)$ to be $\EE_n(K)$, where $\EE$ is a minimal Weierstrass model of $E$ over $\OO_K$. The $E_n(K)$ are well-defined, again by the fact that the minimal Weierstrass model of $E$ is unique up to $\OO_K$-isomorphism.
\begin{prop}
\label{hensel}
For $\EE$ a Weierstrass curve over $\Z_p$, there is an exact sequence
\begin{equation*}
0 \rightarrow \EE_1(K) \rightarrow \EE_0(K) \rightarrow \wt{\EE}_{\textnormal{sm}}(k) \rightarrow 0,
\end{equation*}
where $\wt{\EE}_{\textnormal{sm}}$ is the complement of the singular points in the special fiber $\wt{\EE}$.
\end{prop}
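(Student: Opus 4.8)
The plan is to identify the map $\EE_0(K) \to \wt{\EE}_{\sm}(k)$ as the reduction map restricted to $\EE_0(K)$, and to prove exactness at each of the three nontrivial spots. Exactness on the left is essentially the definition: $\EE_1(K)$ is defined as the kernel of reduction, i.e. the set of points of $\EE(K) = \EE(\OO_K)$ reducing to $0_k \in \wt\EE(k)$, and since $0_k$ is a smooth point this is exactly the kernel of $\EE_0(K) \to \wt\EE_{\sm}(k)$. Exactness in the middle is then a matter of unwinding definitions: $\EE_0(K)$ consists precisely of those $K$-points whose reduction lands in $\wt\EE_{\sm}(k)$, so the reduction map does land in $\wt\EE_{\sm}(k)$ and its kernel is $\EE_1(K)$ by the previous sentence. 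The only real content is surjectivity on the right.

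For surjectivity, I would invoke Hensel's lemma. Given a smooth point $P \in \wt\EE_{\sm}(k)$, I want to lift it to a point of $\EE(\OO_K) = \EE_0(K)$. Working in a suitable affine chart of $\p^2_{\OO_K}$ containing $P$ (say the chart $z = 1$ if $P \ne 0_k$, with the Weierstrass equation $f(x,y) = 0$, or an analogous chart near the origin), smoothness of $\wt\EE$ at $P$ means that one of the partial derivatives $\partial f/\partial x$, $\partial f/\partial y$ does not vanish at $P$ modulo $\mathfrak{m}_K$. Fixing a coordinate of $P$ arbitrarily as a lift in $\OO_K$ and solving for the other via the one-variable Hensel lemma over the complete local ring $\OO_K$ produces an $\OO_K$-point of $\EE$ reducing to $P$; since $\EE$ is projective this is the same as a $K$-point, and by construction it reduces to a smooth point, hence lies in $\EE_0(K)$.

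The step I expect to require the most care is making the Hensel argument uniform across the charts of $\p^2$ and, in particular, handling the point at infinity $0_k$ correctly — one must check that smoothness of $\wt\EE$ at a point in any chart gives a nonvanishing partial derivative of the dehomogenized equation in that chart, so that Hensel applies there too. This is routine for Weierstrass equations (the chart at infinity is the standard one used to define $\EE_1(K)$, cf. \eqref{defE1}), but it is the only place where one genuinely uses the geometry of $\EE \subset \p^2_{\OO_K}$ rather than bare definitions. Everything else — that $\EE_0(K)$ and $\EE_1(K)$ are subgroups, that reduction is a homomorphism — is recalled from \cite[Ch.~IV, VII]{silverman} and may be cited rather than reproved.
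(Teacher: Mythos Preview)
Your proposal is correct and matches the paper's approach: the paper's entire proof is the one-line citation ``This comes down to Hensel's lemma. See \cite[VII.2.1]{silverman},'' and what you have sketched is precisely the argument found there. The only difference is that you spell out the exactness at each spot and the chart-by-chart Hensel lifting, whereas the paper defers all of this to Silverman.
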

\begin{proof}
This comes down to Hensel's lemma. See \cite[VII.2.1]{silverman}.
\end{proof}

For any Weierstrass curve $\EE$, we can consider its \defi{formal group} $\widehat{\EE}$ \cite[IV.1--2]{silverman}. This is a one-dimensional formal group over $\OO_K$. Giving the data of this formal group is the same as giving a power series $F = F_{\widehat{\EE}}$ in $\OO_K[[X,Y]]$, called the \defi{formal group law}. It satisfies
$$
F(X,Y) = X + Y + (\textnormal{terms of degree $\geq 2$})
$$
and
$$F(F(X,Y),Z)) = F(X,F(Y,Z)).
$$
For $\EE$ as in (\ref{Weiereq}), the first few terms of $F$ are given by:
\begin{align*}
& F(X,Y) \,= \\
& X+Y \, - \, a_1XY \, - \, a_2(X^2Y+XY^2)  \,-\,2a_3(X^3Y + XY^3) + (a_1a_2 - 3a_3)X^2Y^2 \,-\\
& (2a_1a_3+2a_4)(X^4Y+XY^4)-(a_1a_3-a_2^2+4a_4)(X^3Y^2+X^2Y^3) \,+ \ldots
\end{align*}
Treating the Weierstrass coefficients $a_i$ as unknowns, we may consider $F$ as an element of $\Z[a_1,a_2,a_3,a_4,a_6][[X,Y]]$ called the \defi{generic formal group law}. If we make $\Z[a_1,a_2,a_3,a_4,a_6]$ into a weighted ring with weight function wt, such that $\wht(a_i)=i$ for each $i$, then the coefficients of $F$ in degree $n$ are homogeneous of weight $n-1$ \cite[IV.1.1]{silverman}. For each $n \in \Z_{\geq 2}$, we define power series $[n]$ in $\OO_K[[T]]$ by $[2](T) = F(T,T)$ and $[n](T) = F([n-1](T),T)$ for $n\geq 3$. Here also, we may consider each $[n]$ either as a power series in $\OO_K[[T]]$ or as a power series in $\Z[a_1,a_2,a_3,a_4,a_6][[T]]$ called the \defi{generic multiplication by $n$ law}. We have:
\begin{lem}
\label{mult_by_p}
Let $[p] = \sum_n b_n T^n \in \Z[a_1,a_2,a_3,a_4,a_6][[T]]$ be the generic formal multiplication by $p$ law. Then:
\begin{enumerate}\itemsep=0pt
\item \label{p_deelt_bn} $p \mid b_n$ for all $n$ not divisible by $p$;
\item \label{gewicht_bn} $\textnormal{wt}(b_n) = n-1$, considering $\Z[a_1,a_2,a_3,a_4,a_6]$ as a weighted ring as above.\end{enumerate}
\end{lem}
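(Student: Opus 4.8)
\medskip
\noindent\textbf{Proof proposal.}
I would prove the two assertions separately. For the divisibility (\ref{p_deelt_bn}) I would first reduce to the claim that $[p]'(T) \in p\,\Z[a_1,a_2,a_3,a_4,a_6][[T]]$: writing $[p](T) = \sum_{n \geq 1} b_n T^n$ we have $[p]'(T) = \sum_{n \geq 1} n b_n T^{n-1}$, so this claim says $n b_n \in p\,\Z[a_1,a_2,a_3,a_4,a_6]$ for every $n$, and then for $p \nmid n$ one chooses $u, v \in \Z$ with $un + vp = 1$ and obtains $b_n = u\,(n b_n) + v\,(p b_n) \in p\,\Z[a_1,a_2,a_3,a_4,a_6]$, which is (\ref{p_deelt_bn}). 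To prove the claim I would use the invariant differential $\omega = P(T)\,dT$ of the generic formal group, with $P(T) = \bigl(\tfrac{\partial F}{\partial X}(0,T)\bigr)^{-1} \in \Z[a_1,a_2,a_3,a_4,a_6][[T]]$; since $F(X,Y) = X + Y + (\text{terms of degree} \geq 2)$ we have $\tfrac{\partial F}{\partial X}(0,0) = 1$, hence $P(0) = 1$. The transformation law of the invariant differential under the multiplication-by-$p$ endomorphism of the formal group (see \cite[Ch.~IV]{silverman}) is the identity
$$
P\bigl([p](T)\bigr)\,[p]'(T) \;=\; p\,P(T) \qquad \text{in } \Z[a_1,a_2,a_3,a_4,a_6][[T]].
$$
Now $P$ is a unit in $\Z[a_1,a_2,a_3,a_4,a_6][[T]]$ since $P(0) = 1$, and $P([p](T))$ is a unit too, since $[p](T)$ has zero constant term and hence $P([p](T))$ has constant term $P(0) = 1$. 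Dividing yields $[p]'(T) = p\,P(T)\,P([p](T))^{-1} \in p\,\Z[a_1,a_2,a_3,a_4,a_6][[T]]$.

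For the weight statement (\ref{gewicht_bn}) I would write $F = \sum_{i,j \geq 0} c_{ij} X^i Y^j$, where by the fact recalled above (\cite[IV.1.1]{silverman}) the coefficient $c_{ij}$ is homogeneous of weight $i + j - 1$, and then prove by induction on $m$ (with the convention $[1](T) = T$) that every coefficient of $T^n$ in $[m](T)$ is homogeneous of weight $n - 1$. The case $m = 1$ is trivial. For the inductive step write $[m-1](T) = \sum_{k} d_k T^k$ with each $d_k$ homogeneous of weight $k - 1$, and expand $[m](T) = F([m-1](T), T) = \sum_{i,j} c_{ij}\,([m-1](T))^i\,T^j$. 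A monomial occurring here has the shape $c_{ij}\, d_{k_1} \cdots d_{k_i}\, T^{k_1 + \cdots + k_i + j}$, and by the inductive hypothesis its coefficient is homogeneous of weight
$$
(i + j - 1) + \sum_{\ell = 1}^{i} (k_\ell - 1) \;=\; (k_1 + \cdots + k_i + j) - 1,
$$
which depends only on the exponent of $T$; hence the coefficient of each fixed power $T^n$ in $[m](T)$ is homogeneous of weight $n - 1$. Taking $m = p$ gives $\wht(b_n) = n - 1$.

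I expect the only delicate points to lie in (\ref{p_deelt_bn}). First, one must ensure the transformation law $[p]^*\omega = p\,\omega$ is valid already over the generic coefficient ring $\Z[a_1,a_2,a_3,a_4,a_6]$, and not merely over a field or a complete local ring; this holds because it is a formal identity of power series whose coefficients are polynomials in the $a_i$, so it suffices to verify it after base change to $\Q[a_1,a_2,a_3,a_4,a_6]$ (for instance, by differentiating the functional equation $\log_F([p](T)) = p\log_F(T)$ of the formal logarithm) and it then descends. Second, the step producing $[p]'(T) = p\,P(T)\,P([p](T))^{-1}$ requires keeping careful track of which power series are units, so that no denominators appear and everything remains in $\Z[a_1,a_2,a_3,a_4,a_6][[T]]$. (One could instead reduce modulo $p$ from the outset and argue $[p]'(T) \equiv 0$, hence $[p](T) \in \F_p[a_1,a_2,a_3,a_4,a_6][[T^p]]$ because the coefficient ring then has characteristic $p$; the integral argument above sidesteps this.) The weight computation (\ref{gewicht_bn}) should need nothing beyond bookkeeping.
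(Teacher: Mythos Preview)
Your proposal is correct. The paper itself does not give a proof but simply cites \cite[IV.4.4]{silverman} for (\ref{p_deelt_bn}) and \cite[IV.1.1]{silverman} for (\ref{gewicht_bn}); your argument for (\ref{p_deelt_bn}) via the invariant differential and the identity $P([p](T))\,[p]'(T) = p\,P(T)$ is exactly Silverman's proof of IV.4.4, and your inductive weight computation for (\ref{gewicht_bn}) is the routine unpacking of \cite[IV.1.1]{silverman}, so you have essentially reproduced the cited arguments in full.
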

\begin{proof}
(\ref{p_deelt_bn}) is proved in \cite[IV.4.4]{silverman}. (\ref{gewicht_bn}) follows from \cite[IV.1.1]{silverman} or what was said above.
\end{proof}
The series $F(u,v)$ converges to an element of $\mm_K$ for all $u,v \in \mm_K$. To $\EE$ one associates the group $\hatEE(\mathfrak{m}_K)$, the $\mathfrak{m}_K$-valued points of $\widehat{\EE}$, which as a set is just $\mathfrak{m}_K$, and whose group operation $+$ is given by $u+v = F(u,v)$ for all $u,v \in \hatEE(\mathfrak{m}_K)$. The identity element of $\hatEE(\mathfrak{m}_K)$  is $0 \in \mm_K$. If $n \geq 1$ is an integer, then by $\hatEE(\mathfrak{m}^n_K)$ we denote the subset of $\hatEE(\mathfrak{m}_K)$ corresponding to the subset $\mm_K^n \subset \mm_K$, where $\mm_K^n$ is the $n$th power of the ideal $\mm_K$ of $\OO_K$. The groups $\hatEE(\mathfrak{m}^n_K)$ are subgroups of $\hatEE(\mathfrak{m}_K)$, and we have an infinite filtration of $\widehat{\EE}(\mm_K)$:
\begin{equation}
\label{filtEhat}
\widehat{\EE}(\mm_K) \supset \widehat{\EE}(\mm_K^2) \supset \widehat{\EE}(\mm_K^3) \supset \cdots
\end{equation}
\begin{prop}\label{fund_isom}
The map
\begin{align*}
\psi_K: \EE_1(K) & \stackrel{\sim}{\rightarrow} \widehat{\EE}(\mathfrak{m}_K) \\
(x,y) & \mapsto -x/y \\
0 & \mapsto 0
\end{align*}
is a isomorphism of topological groups. Moreover, $\psi_K$ respects the filtrations (\ref{filtE}) and (\ref{filtEhat}), i.e. it identifies the subgroups $\EE_n(K)$ defined above with $\hatEE(\mathfrak{m}^n_K)$.
\end{prop}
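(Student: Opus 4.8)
The plan is to recover $\psi_K$, and especially its inverse, from the formal parametrization of $\EE$ near the origin $0_E = (0:1:0)$, following \cite[Ch. IV, VII]{silverman}. I would first introduce the local coordinates $z = -x/y$ and $w = -1/y$ at $0_E$. Substituting $x = z/w$, $y = -1/w$ into (\ref{Weiereq}) and clearing denominators gives the relation $w = z^3 + (a_1 z + a_2 z^2)w + (a_3 + a_4 z) w^2 + a_6 w^3$; solving it recursively for $w$ yields a unique power series $w(z) \in \Z[a_1, \dots, a_6][[z]]$ with $w(z) = z^3(1 + a_1 z + \cdots)$ --- the series underlying $\widehat{\EE}$ --- and then $x(z) = z/w(z)$, $y(z) = -1/w(z)$ are Laurent series in $z$ with leading terms $z^{-2}$ and $-z^{-3}$ respectively.

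Next I would show $\psi_K$ is a bijection onto $\widehat{\EE}(\mathfrak{m}_K)$ with inverse $\phi : z \mapsto (x(z), y(z))$, $0 \mapsto 0_E$. For a point $(x,y) \in \EE_1(K)$, comparing the valuations of the terms in (\ref{Weiereq}) forces $2 v_K(y) = 3 v_K(x)$, hence $v_K(x) = -2n$, $v_K(y) = -3n$ for some $n \geq 1$, so $v_K(-x/y) = n \geq 1$ and $\psi_K$ indeed maps into $\mathfrak{m}_K$. Conversely, for $z \in \mathfrak{m}_K$ the series $w(z)$ converges with $v_K(w(z)) = 3 v_K(z)$, so $\phi(z) \in \EE(K)$ has $v_K(x(z)) = -2 v_K(z) \leq -2$ and $v_K(y(z)) = -3 v_K(z) \leq -3$, i.e. $\phi(z) \in \EE_1(K)$. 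That $\phi$ and $\psi_K$ are mutually inverse amounts to the identity $-1/y = w(-x/y)$ for $(x,y) \in \EE_1(K)$: both sides solve the displayed cubic relation in $w$ at the value $z = -x/y$, and that relation has a unique solution in $\mathfrak{m}_K$ by a contraction-mapping (Hensel) argument, using that $v_K(-1/y) \geq 3$. The filtration compatibility is then immediate: since $v_K(z) = v_K(x) - v_K(y) = n$, we get $(x,y) \in \EE_n(K) \iff v_K(z) \geq n \iff z \in \mathfrak{m}_K^n$.

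For the homomorphism property I would appeal directly to the construction of the formal group law: by definition $F = F_{\widehat{\EE}}$ is obtained by expanding the chord--tangent addition of $\EE$ in the parameter $z$, so that the parameter of $\phi(z_1) +_{\EE} \phi(z_2)$ is formally equal to $F(z_1, z_2)$, an identity in $\Z[a_1, \dots, a_6][[z_1, z_2]]$. Since for $z_1, z_2 \in \mathfrak{m}_K$ all the power series entering this computation converge, the formal identity specializes to an identity of actual points, giving $\psi_K(P_1 + P_2) = F(\psi_K P_1, \psi_K P_2) = \psi_K P_1 +_{\widehat{\EE}} \psi_K P_2$. Finally, in the affine chart $\{Y \neq 0\}$ a point of $\EE_1(K)$ is $[-z : 1 : -w(z)]$ and $\psi_K$ is the restriction of $[X : 1 : Z] \mapsto -X$, hence continuous; continuity of $\phi$, including at $z = 0$ where $[-z : 1 : -w(z)] \to [0:1:0] = 0_E$, follows from convergence of its defining series. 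Thus $\psi_K$ is a homeomorphism, and so an isomorphism of topological groups.

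I expect the main obstacle to be the passage from formal to convergent identities in the homomorphism step --- checking that $F$, the series $w$, and the series expressing the chord--tangent operations all converge on $\mathfrak{m}_K$, and that formal composition of these series matches composition of their values --- together with the Hensel uniqueness pinning down $-1/y = w(-x/y)$. The valuation bookkeeping ($2 v_K(y) = 3 v_K(x)$ on $\EE_1(K)$ and $v_K(w(z)) = 3 v_K(z)$) is routine but must be done carefully, since it underlies both the well-definedness of $\psi_K$ and the filtration claim.
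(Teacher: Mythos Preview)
Your proposal is correct and is precisely the standard argument from \cite[IV.1, VII.2.2]{silverman}, which is all the paper invokes here: its entire proof is the citation ``See \cite[VII.2.2]{silverman}.'' You have simply unpacked that reference, including the power series $w(z)$ that the paper records immediately after the proposition.
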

\begin{proof}
See \cite[VII.2.2]{silverman}. 
\end{proof}
It follows from the proof given in \cite[VII.2.2]{silverman} that there exists a power series $w\in\OO_K[[T]]$, with the first few terms given by
$$
w(T) = T^3 + a_1 T^4 + (a_1^2+a_2) T^5 + (a_1^3+2a_1a_2+a_3)T^6 + \ldots,
$$
such that the inverse to $\psi_K$ is given by $z \mapsto (z/w(z),-1/w(z))$. Given a finite field extension $K \subset L$, we have an obvious commutative diagram
\[
\xymatrix{
\EE_1(K) \ar[r]^{\psi_K}   \ar[d]^{\textnormal{incl}} & \widehat{\EE}(\mathfrak{m}_K) \ar[d]^{\textnormal{incl}}   \\\
\EE_1(L) \ar[r]^{\psi_L}                            & \widehat{\EE_{\OO_L}}(\mathfrak{m}_L)       
}
\]
Here $\widehat{\EE_{\OO_L}}(\mathfrak{m}_L)$ is the set of $\mathfrak{m}_L$-valued points of the formal group of $\EE_{\OO_L}$, the base-change of $\EE$ to $\Spec(\OO_L)$.


\subsection{Extensions of topological abelian groups}
\begin{prop}
\label{extensions}
Suppose $X$ is a topological abelian group and we have a short exact sequence
$$
0 \rightarrow \Z_p^d \rightarrow X \rightarrow \cp \rightarrow 0.
$$
of topological groups where the second arrow is a topological embedding. Then $X$ is isomorphic as a topological group to either $\Z_p^d$ or $\Z_p^d \times \cp$.
\end{prop}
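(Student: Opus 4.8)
The plan is to exploit compactness throughout, via the standard principle that a continuous bijective homomorphism between compact Hausdorff topological groups is an isomorphism of topological groups. First I would record the soft consequences of the hypotheses. Since $X \to \cp$ is continuous and $\cp$ is discrete, the image of $\Z_p^d$ in $X$ is the kernel of this map, hence an \emph{open} subgroup, of index $p$; as $\Z_p^d$ is compact, $X$ is a finite union of cosets of a compact set, so $X$ is compact, and it is Hausdorff because $\overline{\{0\}}$, being contained in the open subgroup $\Z_p^d$ and closed there, is trivial. Moreover any open subgroup $U$ of $X$ may be intersected with $\Z_p^d$, and $X/(U\cap\Z_p^d)$ is then an extension of $X/\Z_p^d \cong \cp$ by the finite quotient $\Z_p^d/(U\cap\Z_p^d)$ of $\Z_p^d$, so it is a $p$-group; thus every finite quotient of $X$ is a $p$-group, and $X$ is pro-$p$ and hence carries the structure of a topological $\Z_p$-module. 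From now on I identify $\Z_p^d$ with its image in $X$.

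Now choose $x \in X \setminus \Z_p^d$ (possible since $X/\Z_p^d \cong \cp \neq 0$); its image generates $X/\Z_p^d$, so $v := px$ lies in $\Z_p^d$. The argument splits according to whether $v \in p\Z_p^d$. \textbf{First case: $v \in p\Z_p^d$.} Write $v = -pw$ with $w \in \Z_p^d$ and put $x' = x + w$; then $px' = 0$ while $x' \notin \Z_p^d$, so $x'$ has exact order $p$, and $i \bmod p \mapsto ix'$ is a homomorphism $\cp \to X$ splitting $X \to \cp$ (the image of $x'$ generates $\cp$). Hence $(a, i\bmod p) \mapsto a + ix'$ is a continuous bijection $\Z_p^d \times \cp \to X$, and by compactness an isomorphism of topological groups, so $X \cong \Z_p^d \times \cp$.

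\textbf{Second case: $v \notin p\Z_p^d$.} Then $v$ is a primitive vector of the lattice $\Z_p^d$, so there is a $\Z_p$-basis $v = e_1, e_2, \ldots, e_d$ of $\Z_p^d$. Using the module structure one checks that $\Z_p x \cap \Z_p^d = \Z_p v = p\cdot\Z_p x$ (if $\lambda x \in \Z_p^d$ then $\lambda \equiv 0 \bmod p$, say $\lambda = p\mu$, and then $\lambda x = \mu v$). Since $X/\Z_p^d$ is cyclic of order $p$ generated by the image of $x$, we get $X = \Z x + \Z_p^d = \Z_p x + \sum_{i\geq 2}\Z_p e_i$, the summand $\Z_p e_1 = \Z_p v = p\Z_p x$ being absorbed into $\Z_p x$. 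I claim $\phi : \Z_p^d \to X$, $(\lambda_1,\ldots,\lambda_d) \mapsto \lambda_1 x + \sum_{i\geq 2}\lambda_i e_i$, is a continuous bijection: it is $\Z_p$-linear hence continuous, surjective by the previous sentence, and injective because $\lambda_1 x + \sum_{i\geq 2}\lambda_i e_i = 0$ forces $\lambda_1 = p\mu$ (reduce mod $\Z_p^d$), whence $\mu v + \sum_{i\geq 2}\lambda_i e_i = 0$, so all coefficients vanish since $v, e_2, \ldots, e_d$ is a basis. By compactness $\phi$ is an isomorphism of topological groups, so $X \cong \Z_p^d$.

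I do not foresee a real obstacle: everything soft (compactness, the $\Z_p$-module structure) is routine, and the only genuine computation is the elementary linear algebra of the second case — identifying $\Z_p x \cap \Z_p^d$ and checking that $\phi$ is bijective. As a variant one could argue via the torsion subgroup $T$ of $X$: since $T \cap \Z_p^d = 0$ one has $|T| \in \{1,p\}$, and then either $X$ is a finitely generated torsion-free $\Z_p$-module, hence free of rank $d$, or the extension $0 \to T \to X \to X/T \to 0$ splits because $X/T \cong \Z_p^d$ is $\Z_p$-free; this reproduces the two cases directly.
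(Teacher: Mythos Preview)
Your proof is correct and takes a genuinely different route from the paper's. The paper first classifies the abstract abelian-group extensions via the computation $\Ext^1_{\Z}(\cp,\Z_p^d)\cong\cp^d$, then exhibits explicit representatives $X_v=\Z_p^d+\langle v/p\rangle\subset\Q_p^d$ for the non-split classes, observes that each $X_v$ with the subspace topology is topologically $\Z_p^d$, and finally checks that any algebraic equivalence of extensions is automatically a homeomorphism because $X$ is the topological disjoint union of the cosets of $\Z_p^d$. You instead establish directly that $X$ is compact Hausdorff (and, implicitly, totally disconnected) and pro-$p$, hence a topological $\Z_p$-module, and then dispose of the two cases $px\in p\Z_p^d$ and $px\notin p\Z_p^d$ by elementary $\Z_p$-linear algebra, upgrading continuous bijections to homeomorphisms via compactness. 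Your approach is more self-contained---no $\Ext$ computation is needed---and the appeal to compactness replaces the paper's somewhat ad hoc coset argument for the topological step; the paper's approach, on the other hand, actually parametrizes the extension classes, though this extra information is not used later. The torsion-subgroup variant you sketch at the end (splitting off $T$ and using that $X/T$ is $\Z_p$-free) is also valid and is perhaps the most conceptual of the three.
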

It is indeed necessary to require $\Z_p^d \rightarrow X$ to be a topological embedding, i.e. a homeomorphism onto its image, since otherwise we could take $X$ to be the product $(\Z_p^d)^{\textnormal{ind}} \times \cp$, where the first factor is the abelian group $\Z_p^d$ endowed with the indiscrete topology.
\begin{proof}
First, we claim that $\Ext^1_{\Z}(\cp,A) = A/pA$ for any abelian group $A$. Taking the long exact sequence associated to $\Hom_{\Z}(-,A)$ for the exact sequence $0 \rightarrow \Z \stackrel{p}{\rightarrow} \Z \rightarrow \cp \rightarrow 0$ results in the exact sequence
$$
\Hom(\Z,A) \stackrel{p}{\rightarrow} \Hom(\Z,A) \rightarrow \Ext^1_{\Z}(\cp,A) \rightarrow \Ext^1_{\Z}(\Z,A) = 0
$$
where the last equality follows from the fact that $\Hom(\Z,-)$ is an exact functor. Using that $\Hom(\Z,A) = A$, we get
$$
\Ext^1_{\Z}(\cp,A) = A/pA,
$$
which proves the claim. Putting $A = \Z_p^d$, we find $\Ext^1_{\Z}(\cp,\Z_p^d)=\cp^d$. We conclude that the equivalence classes of extensions of $\Z$-modules $0 \rightarrow \Z_p^d \rightarrow X \rightarrow \cp \rightarrow 0$ are in bijective correspondence with the elements of $\cp^d$. The element $0 \in \cp^d$ corresponds to the split extension. The non-split ones are obtained as follows. For $v \in \Z_p^d - p\Z_p^d$, we construct an extension 
$$
0 \rightarrow \Z_p^d \rightarrow X_v \stackrel{f_v}{\rightarrow} \cp \rightarrow 0.
$$
by defining the subgroup $X_v \subset \Q_p^d$ as $X_v = \Z_p^d + \left\langle v/p \right\rangle$ and letting $f_v : X_v \rightarrow \cp$ be the unique group homomorphism that is trivial on $\Z_p^d \subset X_v$ and that sends $v/p$ to $1$. The equivalence class of the above extension only depends on the class of $v$ modulo $p \Z_p^d$. Note that if we take any element $x \in X_v$ mapping to $1\in \cp$, we have $px = v+ pv_1 \in \Z_p^d$ for some $v_1 \in \Z_p^d$. Note further that $X_v$ is topologically isomorphic to $\Z_p^d$, if we give it the subspace topology.

A diagram chase shows that this construction gives us $p^d-1$ different equivalence classes of extensions. Suppose that $v,w \in \Z_p^d - p\Z_p^d$ and $\phi : X_v \stackrel{\sim}{\rightarrow} X_w$ are such that
\[
\xymatrix{
0 \ar[r] & \Z_p^d \ar[r] \ar[d]^{\textnormal{id}} & X_v \ar[r]^{f_v} \ar[d]^{\phi} & \cp \ar[r] \ar[d]^{\textnormal{id}} & 0  \\\
0 \ar[r] & \Z_p^d \ar[r] 				  & X_w \ar[r]^{f_w} & \cp \ar[r]                                          & 0 
}
\]
is a commutative diagram. Consider an element $x \in X_v$ such that $f_v(x) = 1$. Then $f_w(\phi(x))= 1$. Furthermore, $px = v + pv_1$ for some $v_1 \in p\Z_p^d$, and $\phi(px) = p \phi(x) = w + pw_1$ for some $w_1 \in p\Z_p^d$. Hence $v + pv_1 = \phi(v+pv_1) = w + pw_1$, so $v \equiv  w \pmod{p\Z_p^d}$.


Let $X$ be a topological group sitting inside an extension of topological groups $0 \rightarrow \Z_p^d \stackrel{i}{\rightarrow} X \stackrel{f}{\rightarrow} \cp \rightarrow 0$, with $i$ a topological embedding and $f$ continuous. This means that there exists an extension of topological groups $0 \rightarrow \Z_p^d \rightarrow Y \rightarrow \cp \rightarrow 0$ that is either split or equal to one of the form $0 \rightarrow \Z_p^d \rightarrow X_v \stackrel{f_v}{\rightarrow} \cp \rightarrow 0$, an isomorphism of groups $\phi : X \stackrel{\sim}{\rightarrow} Y$, and a commutative diagram:
\[
\xymatrix{
0 \ar[r] & \Z_p^d \ar[r] \ar[d]^{\textnormal{id}} & X \ar[r]^{f} \ar[d]^{\phi} & \cp \ar[r] \ar[d]^{\textnormal{id}} & 0  \\\
0 \ar[r] & \Z_p^d \ar[r] 				  & Y \ar[r] & \cp \ar[r]                                          & 0.
}
\]
We claim that $\phi$ must also be a homeomorphism. Since both $X$ and $Y$ are topological disjoint unions of the translates of their subgroups $\Z_p^d$, and $\phi$ respects the disjoint union decomposition, this is clear. So $X$ is topologically isomorphic to $Y$, and hence to either $\Z_p^d$ or $\Z_p^d \times \cp$.
\end{proof}

\begin{rem}\label{induction}
\upshape
By repeatedly applying Proposition \ref{extensions}, we see that if we have a finite filtration
$$
\Z_p^d = M_n \subset M_{n-1} \subset  \ldots \subset M_1
$$
of topological groups, in which all quotients are isomorphic to $\cp$, then $M_1$ is torsion-free if and only if it is topologically isomorphic to $\Z_p^d$.
\end{rem}

The following is a strengthening of Proposition \ref{extensions} in the case $d=1$, which will be important for us.

\begin{cor}
\label{direct}
Let $p$ be a prime and suppose we have a short exact sequence
$$
0 \rightarrow p \Z_p \stackrel{i}{\rightarrow} X \rightarrow \cp \rightarrow 0
$$
of topological abelian groups where the second arrow is a topological embedding. If $X$ is topologically isomorphic to $\Z_p$, then $v_p(i^{-1}(px)) = 1$ for all $x \in X - i(p\Z_p)$, where $v_p$ is the $p$-adic valuation. If $X$ is not topologically isomorphic to $\Z_p$, it is topologically isomorphic to $p\Z_p \times \F_p$, and we have $v_p(i^{-1}(px)) > 1$ for all $x \in X - i(p\Z_p)$.
\end{cor}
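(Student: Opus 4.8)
The plan is to route everything through a single auxiliary homomorphism. Since $X/i(p\Z_p) \cong \cp$ is killed by $p$, multiplication by $p$ on $X$ has image inside $i(p\Z_p)$, so one can define
$$
\chi : X \longrightarrow p\Z_p, \qquad \chi(x) = i^{-1}(px);
$$
it is a group homomorphism, and it is continuous because $i$ is a topological embedding (so $i^{-1}$ is continuous on $i(p\Z_p)$). The key elementary observations are that $\chi \circ i : p\Z_p \to p\Z_p$ is multiplication by $p$, so $\chi(i(p\Z_p)) = p^2\Z_p$; that $\ker\chi = X[p]$, which meets the torsion-free subgroup $i(p\Z_p)$ trivially and therefore injects into $X/i(p\Z_p) \cong \cp$; and — invoking Proposition \ref{extensions} in the case $d = 1$ — that $X$ is topologically isomorphic to $\Z_p$ or to $\Z_p \times \cp \cong p\Z_p \times \cp$, so in particular $X$ is compact. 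Compactness forces $\chi(X)$ to be a closed subgroup of $p\Z_p \cong \Z_p$; since it contains $p^2\Z_p = \chi(i(p\Z_p))$, it equals either $p\Z_p$ or $p^2\Z_p$. I would then show these two cases correspond exactly to the two cases of the corollary.

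In the case $\chi(X) = p\Z_p$, the homomorphism $\chi$ descends to a surjection $X/i(p\Z_p) \to p\Z_p/p^2\Z_p$ between two groups of order $p$, hence an isomorphism; so every $x \in X - i(p\Z_p)$ has $\chi(x) \notin p^2\Z_p$, i.e. $v_p(i^{-1}(px)) = 1$, as wanted. For the structural claim, if $\ker\chi$ were nonzero its injection into $X/i(p\Z_p) \cong \cp$ would be onto, forcing $X = i(p\Z_p) + \ker\chi$ and hence $\chi(X) = \chi(i(p\Z_p)) = p^2\Z_p$ — a contradiction; so $\chi$ is injective, hence a continuous bijective homomorphism of compact Hausdorff groups $X \to p\Z_p$, hence a topological isomorphism, and $X$ is topologically isomorphic to $\Z_p$.

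In the case $\chi(X) = p^2\Z_p$, every $x \in X$ has $\chi(x) \in p^2\Z_p$, so $v_p(i^{-1}(px)) \geq 2 > 1$ for all $x$ (with the convention $v_p(0) = \infty$, the value $\infty$ being attained precisely on the nonzero torsion, which lies in $X - i(p\Z_p)$). Moreover $\chi(X) = \chi(i(p\Z_p))$ gives $X = i(p\Z_p) + \ker\chi$, and combined with $\ker\chi \cap i(p\Z_p) = 0$ this exhibits $X$ as the internal direct sum $i(p\Z_p) \oplus \ker\chi$; since $\ker\chi$ is finite (and Hausdorff, hence discrete) and both factors are compact, the natural bijection $i(p\Z_p) \times \ker\chi \to X$ is a topological isomorphism, so $X \cong p\Z_p \times \cp$. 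As $\Z_p$ is torsion-free and $p\Z_p \times \cp$ is not, these two cases are mutually exclusive and exhaust the possibilities, which is exactly the statement.

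The argument is mostly bookkeeping; the only places needing care are topological, and they are mild. One must check that $\chi$ is genuinely continuous — this is precisely where the hypothesis that $i$ is a topological embedding is used, as the remark following Proposition \ref{extensions} already warns — that $\chi(X)$ is closed in $\Z_p$ (for which compactness of $X$, supplied by Proposition \ref{extensions}, suffices), and that the purely algebraic direct-sum decomposition in the second case is automatically topological (a continuous bijective homomorphism of compact Hausdorff groups is a homeomorphism). I do not expect any of these to present a real obstacle; the substance is the interplay between $\chi$ and the index-$p$ constraint from $X/i(p\Z_p) \cong \cp$.
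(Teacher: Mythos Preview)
Your proof is correct, and it takes a somewhat different route from the paper's. The paper argues case by case from the classification: if $X \cong \Z_p$ it observes that $i$ must be multiplication by a unit followed by the inclusion $p\Z_p \subset \Z_p$, from which $v_p(i^{-1}(px)) = 1$ is immediate for $x$ a unit; if $X \not\cong \Z_p$, Proposition \ref{extensions} gives $X \cong p\Z_p \times \F_p$ with $i$ the inclusion into the first factor, and writing $x = (y,c)$ one reads off $v_p(i^{-1}(px)) = v_p(py) \geq 2$. Your approach instead packages everything into the single homomorphism $\chi = i^{-1}\circ(p\cdot)$ and lets the dichotomy $\chi(X) \in \{p\Z_p,\, p^2\Z_p\}$ drive both the structural statement and the valuation claim simultaneously; in effect you re-derive the $d=1$ case of Proposition \ref{extensions} along the way. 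This is more uniform and a bit more self-contained --- indeed the only external input you actually use from Proposition \ref{extensions} is compactness of $X$, which you could obtain directly as a finite union of translates of the compact subgroup $i(p\Z_p)$. The paper's argument is shorter but leans more heavily on the prior classification; yours trades brevity for a cleaner internal logic.
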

\begin{proof}
If $X$ is topologically isomorphic to $\Z_p$, the map $i$ is given by multiplication by some unit $\alpha \in \Z_p^{\ast}$ followed by the inclusion $p \Z_p \subset \Z_p$. The conclusion follows.

If $X$ is not topologically isomorphic to $\Z_p$, then by Proposition \ref{extensions} we must have $X \cong p\Z_p \times \F_p$. But then if $x = (y,c)$, we have $v_p(i^{-1}(px)) = v_p(py) > 1$.
\end{proof}

\begin{lem}
Let $K$ be a finite extension of $\Q_p$ with ring of integers $\OO_K$. Then $\OO_K$ is topologically isomorphic to $\Z_p^d$, where $d = [K:\Q_p]$.
\end{lem}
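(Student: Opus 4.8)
The plan is to produce an explicit isomorphism of $\Z_p$-modules $\Z_p^d \xrightarrow{\sim} \OO_K$ and then observe that it is automatically a homeomorphism.

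I would begin with the (standard) algebraic input: $\OO_K$ is a finitely generated $\Z_p$-module. This is the finiteness of the integral closure of $\Z_p$ in the finite extension $K/\Q_p$, a standard fact from the theory of local fields. Concretely one can even write down a basis: if $\pi \in \OO_K$ is a uniformizer, $e = v_K(p)$ the ramification index, $f = [k:\F_p]$ the residue degree, and $\omega_1,\dots,\omega_f \in \OO_K$ are lifts of an $\F_p$-basis of $k$, then the $ef$ elements $\pi^{i}\omega_j$ with $0 \le i < e$ and $1 \le j \le f$ form a $\Z_p$-basis of $\OO_K$. Since $ef = [K:\Q_p] = d$ by the fundamental identity for complete discretely valued fields, $\OO_K$ is a free $\Z_p$-module of rank $d$. (Alternatively: $\OO_K$ is finitely generated and torsion-free over the principal ideal domain $\Z_p$, hence free of some rank $r$; tensoring with $\Q_p$ gives $K \cong \Q_p^{r}$, forcing $r = d$.)

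Now fix a $\Z_p$-basis $x_1,\dots,x_d$ of $\OO_K$ and let $\phi \colon \Z_p^d \to \OO_K$ be the $\Z_p$-linear map sending the standard basis to $(x_1,\dots,x_d)$, i.e.\ $\phi(c_1,\dots,c_d) = \sum_{i} c_i x_i$. By construction $\phi$ is an isomorphism of abstract groups. It is continuous, being assembled from the continuous addition and scalar multiplication of the topological $\Z_p$-module $\OO_K$. Finally, $\Z_p^d$ is compact and $\OO_K$ is Hausdorff, so the continuous bijection $\phi$ is a homeomorphism; equivalently, one checks directly that $\phi\bigl((p^n\Z_p)^d\bigr) = p^n \OO_K$ and that these families are neighbourhood bases of $0$ on the two sides (here using that the subspace topology on $\OO_K \subset K$ coincides with its $p$-adic topology). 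Either way $\phi$ is a topological isomorphism.

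The only genuine obstacle is the finiteness of $\OO_K$ over $\Z_p$; once that is granted, the rank computation and the verification that $\phi$ is a homeomorphism are both routine.
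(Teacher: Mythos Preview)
Your proof is correct and follows essentially the same route as the paper: both establish that $\OO_K$ is a free $\Z_p$-module of rank $d$ and then argue that the resulting group isomorphism is automatically bicontinuous. The only difference is in that last step: you use the elementary fact that a continuous bijection from a compact space to a Hausdorff space is a homeomorphism, whereas the paper invokes the general principle that any abstract isomorphism between topologically finitely generated profinite groups is bicontinuous.
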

\begin{proof}
$\OO_K$ is a free $\Z_p$-module of rank $d$, so there is a group isomorphism $\Z_p^d \stackrel{\sim}{\rightarrow} \OO_K$. Since both groups are topologically finitely generated, any isomorphism between them is bicontinuous \cite[1.1]{progroups}.
\end{proof}

\section{Weierstrass curves with additive reduction over $\OO_K$}


As in section \ref{preliminaries}, let $K$ be a finite extension of $\Q_p$. Let $\OO_K$ again be the ring of integers of $K$, with maximal ideal $\mathfrak{m}_K$ and residue field $k$.

In this section, we gather some general properties of Weierstrass curves over $\OO_K$ with additive reduction.
\begin{lem}
\label{ai_in_maxideaal}
Let $\EE/\OO_K$ be a Weierstrass curve with additive reduction. Then $\EE$ is $\OO_K$-isomorphic to a Weierstrass curve of the form
$$
y^2 + a_1 xy + a_3 y = x^3 + a_2 x^2 + a_4 x + a_6,
$$
where all $a_i$ lie in $\mathfrak{m}_K$.
\end{lem}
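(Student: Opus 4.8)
The plan is to reduce to the desired form by an explicit $\OO_K$-linear change of Weierstrass coordinates carried out in two steps: first translate the coordinates so that the singular point of the special fiber becomes the origin, which immediately forces $a_3,a_4,a_6\in\mathfrak{m}_K$; then apply a shear $y\mapsto y+sx$ chosen using the additive reduction hypothesis, which forces $a_1,a_2\in\mathfrak{m}_K$ as well. Throughout, one works with changes of variables of the form $x\mapsto x+r$, $y\mapsto y+sx+t$ with $r,s,t\in\OO_K$ (i.e.\ $u=1$ in the standard transformation formulas of \cite[III.1]{silverman}); these preserve the class of Weierstrass curves over $\OO_K$, do not change the discriminant, and induce $\OO_K$-isomorphisms, so it suffices to arrange the conclusion after applying one of them.

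For the first step I would note that a singular Weierstrass cubic over the residue field $k$ has a \emph{unique} singular point: two distinct (possibly conjugate) singular points would force the line through them to meet the cubic with intersection multiplicity at least $4$, contradicting B\'ezout. Since $(0:1:0)$ is a smooth point of every Weierstrass curve (in the chart $Y=1$ the equation has nonvanishing linear term $Z$), this singular point is affine; being Galois-invariant it is $k$-rational because $k$ is finite, hence perfect. Lifting its coordinates to some $(r,t)\in\OO_K^2$ and applying $x\mapsto x+r$, $y\mapsto y+t$, we may assume the singular point of $\EE_k$ is $(0,0)$. Evaluating the equation at $(0,0)$ modulo $\mathfrak{m}_K$ gives $\bar a_6=0$, and evaluating the two partial derivatives there gives $\bar a_4=0$ and $\bar a_3=0$; hence $a_3,a_4,a_6\in\mathfrak{m}_K$.

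For the second step, the reduced curve is now $y^2+\bar a_1xy=x^3+\bar a_2x^2$ over $k$, so the tangent cone at the origin is the binary quadratic form $Q(x,y)=y^2+\bar a_1xy-\bar a_2x^2$. Additive reduction means the cusp has a single tangent direction, i.e.\ $Q$ is the square of a nonzero linear form; writing $Q=(y-\bar\alpha x)^2$ yields $\bar a_1=-2\bar\alpha$ and $\bar a_2=-\bar\alpha^2$, with $\bar\alpha\in k$ (equal to $-\bar a_1/2$ if $p\neq 2$, and a square root of $\bar a_2$ if $p=2$, which exists since $k$ is perfect). Lifting $\bar\alpha$ to $s\in\OO_K$ and applying $y\mapsto y+sx$, the transformation formulas with $u=1$, $r=t=0$ give $a_3'=a_3$, $a_6'=a_6$, and $a_4'=a_4-sa_3$, all still in $\mathfrak{m}_K$, while $\bar a_1'=\bar a_1+2\bar s=0$ and $\bar a_2'=\bar a_2-\bar s\bar a_1-\bar s^2=0$ by the relations defining $\bar\alpha$. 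Thus every $a_i'$ lies in $\mathfrak{m}_K$, which is the assertion.

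The only genuinely delicate point — the \emph{main obstacle} in spirit — is getting the first step right: one must verify that the singular point of $\EE_k$ is a $k$-rational affine point before one is entitled to translate it to the origin over $\OO_K$, and one must treat $p=2$ with care, where the perfect-square condition on $Q$ forces $\bar a_1=0$ outright (the cross term of $(y-\bar\alpha x)^2$ vanishes in characteristic $2$) and the shear parameter is forced to be a square root of $\bar a_2$, available precisely because residue fields of $p$-adic fields are perfect. Everything else is a routine substitution, and no minimality hypothesis on $\EE$ is needed.
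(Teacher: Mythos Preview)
Your proof is correct and follows essentially the same two-step strategy as the paper: first translate the singular point of the special fiber to $(0,0)$ to force $a_3,a_4,a_6\in\mathfrak{m}_K$, then apply a shear $y\mapsto y+sx$ to make the cuspidal tangent line equal to $y=0$, forcing $a_1,a_2\in\mathfrak{m}_K$. The paper's proof is terser and leaves implicit the points you spell out (uniqueness and $k$-rationality of the singular point, the $p=2$ case where $\bar a_1$ is already zero and $s$ must be chosen as a square root of $\bar a_2$), but the argument is the same.
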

\begin{proof}
We construct an automorphism $\alpha \in \PGL_3(\OO_K)$ that maps $\EE$ to a Weierstrass curve of the desired form. Consider a translation $\alpha_1 \in \PGL_3(\OO_K)$ moving the singular point of the special fiber $\EE_k$ to $(0:0:1)$. The image $\EE_1 = \alpha_1(\EE)$ is a Weierstrass curve with coefficients satisfying $a_3,a_4,a_6$ in $\mm_K$. There exists a second automorphism $\alpha_2 \in \PGL_3(\OO_K)$, of the form $x' = x, y' = y + cx$, such that in the special fiber of $\alpha_2(\EE_1)$ the unique tangent at $(0:0:1)$ is given by $y'=0$. The Weierstrass curve $\EE_2=\alpha_2(\EE_1)$ now has all its coefficients $a_1,a_2,a_3,a_4,a_6$ in $\mm_K$. One may thus take $\alpha = \alpha_2 \circ \alpha_1$.
\end{proof}

Suppose that $\EE/\OO_K$ is a Weierstrass curve given by (\ref{Weiereq}), and suppose that the $a_i$ are contained in $\mm_K$. In particular, $\EE$ has additive reduction. If we let $F$ denote the formal group law of $\EE$, then the assumption on the $a_i$ implies that $F(u,v)$ converges to an element of $\OO_K$ for all $u,v \in \OO_K$. Hence $F$ can be seen to induce a group structure on $\OO_K$, extending the group structure on $\widehat{\EE}(\mm_K)$. The same statement holds true when we replace $K$ by a finite field extension $L$.
\begin{defn}
\upshape
Let $\EE/\OO_K$ be a Weierstrass curve given by (\ref{Weiereq}), and assume that the $a_i$ are contained in $\mm_K$. For any finite field extension $K \subset L$, we denote by $\widehat{\EE}(\OO_L)$ the topological group obtained by endowing the space $\OO_L$ with the group structure induced by $F$.
\end{defn}

The following proposition will be fundamental in determining of the structure of $\EE_0(\Q_p)$ as a topological group for Weierstrass curves with additive reduction.

\begin{prop} 
\label{tocomputewith}
Let $\EE/\OO_K$ be a Weierstrass curve given by (\ref{Weiereq}), and assume that the $a_i$ are contained in $\mm_K$. 
\begin{enumerate}\itemsep=0pt
\item \label{Fhomeo} The map $\Psi : \EE_0(K) \rightarrow \widehat{\EE}(\OO_K)$ that sends $(x,y)$ to $-x/y$ is an isomorphism of topological groups.
\item \label{pgt7} If $6e(K/\Q_p) < p-1$, where $e$ denotes the ramification degree, then $\EE_0(K)$ is also topologically isomorphic to $\OO_K$ equipped with the usual group structure.
\end{enumerate}
\end{prop}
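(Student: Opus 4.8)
The plan is to prove part~\ref{Fhomeo} first and then deduce part~\ref{pgt7} from it by means of the formal logarithm. For part~\ref{Fhomeo}, begin by identifying the target group. Since all $a_i$ lie in $\mm_K$, the special fibre is the cuspidal cubic $y^2=x^3$, whose smooth locus is a copy of $\G_a$: its points are $(0:1:0)$ (the identity) together with the affine points $(t^{-2}:-t^{-3}:1)$ for $t\in k^\times$, the cusp $(0:0:1)$ being excluded. Two observations follow. (a) If $P=(x_0,y_0)\in\EE_0(K)\setminus\{0_E\}$ then $y_0\in\OO_K^\times$, because $v_K(y_0)>0$ would force $v_K(x_0)>0$ by the Weierstrass equation, so $P$ would reduce to the cusp; hence $\Psi(P)=-x_0/y_0$ lies in $\OO_K$ (and $\Psi(0_E)=0$). (b) Under the standard isomorphism $\G_a\xrightarrow{\sim}\widetilde{\EE}_{\sm}$, $t\mapsto(t^{-2}:-t^{-3}:1)$, the function $-x/y$ pulls back to $t$, so the composite $\EE_0(K)\xrightarrow{\Psi}\OO_K\to k$ equals the reduction map $\EE_0(K)\to\widetilde{\EE}_{\sm}(k)\cong k$.

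Next I would show $\Psi$ is a homeomorphism. The power series $w\in\OO_K[[T]]$ from just after Proposition~\ref{fund_isom} has degree-$n$ coefficient of weight $n-3$; since the $a_i$ lie in $\mm_K$ this coefficient lies in $\mm_K^{\lceil(n-3)/6\rceil}$, so $w\in\OO_K\langle T\rangle$ and $z\mapsto\bigl(z/w(z),-1/w(z)\bigr)$ makes sense for every $z\in\OO_K^\times$ (there $w(z)=z^3\cdot(\text{a unit})$); it satisfies the Weierstrass equation by convergence and reduces to the smooth point $(\tilde z^{-2}:-\tilde z^{-3}:1)$, hence lands in $\EE_0(K)$. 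Together with $0\mapsto 0_E$ and Proposition~\ref{fund_isom} (for $z\in\mm_K$) this is a continuous right inverse of $\Psi$. It is also a left inverse: for $z\in\OO_K^\times$, substituting $x=-zy$ into the Weierstrass equation gives a cubic in $y$ with unit leading coefficient whose reduction is $\tilde y^2(\tilde z^3\tilde y+1)=0$, having the single simple root $\tilde y=-\tilde z^{-3}$, so by Hensel exactly one root lies in $\OO_K$ with that reduction and gives the unique point of $\EE_0(K)$ above $z$ (any other lift reduces to the cusp). Hence $\Psi\colon\EE_0(K)\to\OO_K$ is a bijection with continuous inverse, i.e.\ a homeomorphism.

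The remaining point, which I expect to be the main obstacle, is that $\Psi$ is a \emph{group} homomorphism onto $\widehat{\EE}(\OO_K)$, equivalently that the group law of $\EE_0(K)$ transported to $\OO_K$ via $\Psi$ is $F$. This holds on $\mm_K\times\mm_K$, where $\Psi^{-1}$ takes values in $\EE_1(K)$ and the transported law is $F$ by Proposition~\ref{fund_isom}. To pass to $\OO_K\times\OO_K$ I would argue by rigid-analytic continuation: the transported law $\oplus$ is the composite of $\Psi^{-1}\times\Psi^{-1}$ (convergent power series), the group law $\EE\times\EE\to\EE$ (an algebraic morphism), and $z=-x/y$ (regular on $\EE_0$); since $\EE_0(K)$ is a subgroup, $\oplus$ is defined on all of $\OO_K^2$, is power-bounded (values in $\OO_K$) and rigid-analytic, hence $\oplus\in\OO_K\langle X,Y\rangle$. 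Likewise $F\in\OO_K\langle X,Y\rangle$: by Lemma~\ref{mult_by_p}(\ref{gewicht_bn}) its degree-$n$ coefficient has weight $n-1$, hence lies in $\mm_K^{\lceil(n-1)/6\rceil}$ since $a_6$ is the heaviest generator. As $\mm_K^2$ is infinite, hence Zariski-dense in the closed bidisc, two such series agreeing on it must coincide, so $\oplus=F$. The delicate step inside this is checking that $\oplus$ is analytic on the \emph{whole} bidisc: written in coordinates the addition morphism has denominators, vanishing where two summands coincide or sum to $0_E$, and one must see that $\oplus$ extends regularly across these loci — which it does because it is a genuine, everywhere-defined group law (the usual case distinction: doubling formula, and the case $P+Q=0_E$).

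For part~\ref{pgt7}: by part~\ref{Fhomeo} it suffices to show $\widehat{\EE}(\OO_K)\cong(\OO_K,+)$ when $6e<p-1$, $e=e(K/\Q_p)$. I would use the formal logarithm $\log_F(T)=T+\sum_{n\ge2}b_nT^n$, characterised by $\log_F(F(X,Y))=\log_F(X)+\log_F(Y)$, and its compositional inverse $\exp_F(T)=T+\sum_{n\ge2}e_nT^n$. The numerators of $b_n$ and $e_n$ are $\Z$-polynomials in $a_1,\dots,a_6$ of weight $n-1$, hence of $v_p\ge\frac{n-1}{6e}$ because the $a_i$ lie in $\mm_K$ and $a_6$ is heaviest; the denominator of $b_n$ divides $n$, and that of $e_n$ has $v_p\le\frac{n-1}{p-1}$ (this is exactly the classical convergence estimate, Silverman~IV.6). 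The hypothesis $6e<p-1$ gives $\frac1{6e}>\frac1{p-1}$, so for $u\in\OO_K$ the $n$-th term of $\exp_F(u)$ has valuation $\ge\frac{n-1}{6e}-\frac{n-1}{p-1}=(n-1)\bigl(\frac1{6e}-\frac1{p-1}\bigr)\ge0$, tending to $\infty$; and the $n$-th term of $\log_F(u)$ has valuation $\ge\frac{n-1}{6e}-v_p(n)$, which for $n\ge2$ is $\ge0$ (reduce to $n=p^k$, where the inequality reads $p^k-1\ge6ek$, valid since $6e\le p-2$) and tends to $\infty$. Hence $\log_F$ and $\exp_F$ converge as maps $\OO_K\to\OO_K$ and, being mutually inverse power series, are mutually inverse maps; so $\log_F\colon\widehat{\EE}(\OO_K)\xrightarrow{\sim}(\OO_K,+)$ is an isomorphism of topological groups, and composing with $\Psi$ finishes the proof. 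The only real work here is these two valuation estimates; note that the factor $6$, coming from $a_6$ being the heaviest Weierstrass coefficient, is precisely what must beat the denominators $n$ and $\sim n!$, and it does so exactly when $6e<p-1$.
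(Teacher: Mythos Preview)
Your approach is genuinely different from the paper's. The paper proves both parts simultaneously via a single field-extension trick: it adjoins a sixth root $\rho$ of a uniformizer, setting $L=K(\rho)$, and rescales $\EE$ to a Weierstrass curve $\DD/\OO_L$ with coefficients $\alpha_i=a_i/\rho^i\in\OO_L$. The substitution $(x,y)\mapsto(x/\rho^2,y/\rho^3)$ then identifies $\EE_0(L)$ with $\DD_1(L)$, and under this identification your map $\Psi$ (over $L$) becomes $\rho^{-1}\cdot\psi_L$, where $\psi_L$ is the isomorphism of Proposition~\ref{fund_isom} applied to $\DD$. Since $\psi_L$ is already known to be a group isomorphism, the identity $\rho\,F_{\widehat\EE}(X,Y)=F_{\widehat\DD}(\rho X,\rho Y)$ immediately makes $\Psi_L$ a group isomorphism, and restricting to $K$ gives part~\ref{Fhomeo}. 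For part~\ref{pgt7} the paper simply observes that $6e(K/\Q_p)<p-1$ forces $v_L(p)<p-1$, so $\DD_1(L)\cong\mm_L$ is torsion-free by the standard formal-group argument; hence $\EE_0(K)\hookrightarrow\DD_1(L)$ is torsion-free, and Remark~\ref{induction} finishes.

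Your argument for part~\ref{pgt7} via explicit convergence of $\log_F$ and $\exp_F$ on $\OO_K$ is correct and gives a more explicit isomorphism than the paper's torsion-freeness argument. However, in part~\ref{Fhomeo} the step you flag as ``delicate'' is a genuine gap as written. You want to show the transported law $\oplus$ lies in $\OO_K\langle X,Y\rangle$, but your justification---composing $\Psi^{-1}\times\Psi^{-1}$ with the algebraic addition and then $-x/y$---runs into the problem that the affine addition formulas have poles along $x_P=x_Q$, and saying ``it extends regularly because it is a genuine group law'' is exactly what needs proof. One way to close this is to trace through Silverman's construction of $F$ in \cite[IV.1]{silverman}: the slope $\lambda=(w(z_2)-w(z_1))/(z_2-z_1)$, the third root $z_3$, and the inversion are each given by elements of $\Z[a_i][[z_1,z_2]]$ with the weight-grading that (under $a_i\in\mm_K$) forces convergence on $\OO_K^2$; the geometric identities relating them are then polynomial identities among convergent series, hence valid pointwise, which shows directly that $F(z_1,z_2)$ is the $z$-coordinate of $\Psi^{-1}(z_1)+\Psi^{-1}(z_2)$ for all $z_1,z_2\in\OO_K$. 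The paper's extension trick sidesteps this entirely by reducing to $\DD_1(L)$, where Proposition~\ref{fund_isom} already supplies the group-homomorphism property.
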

\begin{proof}
Let $\pi$ be a uniformizer for $\OO_K$. Consider the field extension $L = K(\rho)$ with $\rho^6 = \pi$. Then define the Weierstrass curve $\DD$ over $\OO_L$ by
$$
y^2 + \alpha_1 xy + \alpha_3 y = x^3 + \alpha_2 x^2 + \alpha_4 x^4 x + \alpha_6,
$$
where $\alpha_i = a_i / \rho^i$. There is a birational map $ \phi : \EE \times_{\OO_K} \OO_L \dashrightarrow \DD$, given by $\phi(x,y)= (x/\rho^2,y/\rho^3)$. The birational map $\phi$ induces an isomorphism on generic fibers, and hence a homeomorphism between $\EE(L)$ and $\DD(L)$. Using (\ref{defE1}) and the fact that we have $(x,y)\in \EE_0(L)$ if and only if $v_L(x),v_L(y)$ are both not greater than zero, one sees that $\phi$ induces a bijection $\EE_0(L) \stackrel{\sim}{\rightarrow} \DD_1(L)$, that all maps (\textit{a priori} just of sets) in the following diagram are well-defined, and that the diagram commutes:
\[
\xymatrix{
\EE_1(K) \ar[d]^{\psi_K}  \ar[r]^{\textnormal{incl}} & 
\EE_0(K) \ar[d]^{\Psi} \ar[r]^{\textnormal{incl}} &                  
\EE_0(L) \ar[d]^{\Psi_L}  \ar[r]^{\phi}       &            
\DD_1(L) \ar[d]^{\psi_L}                            \\\
\widehat{\EE}(\mm_K)  \ar[r]^{\textnormal{incl}} & \widehat{\EE}(\OO_K)  \ar[r]^{\textnormal{incl}} & \widehat{\EE}(\OO_L)  \ar[r]^{\cdot \rho} & \widehat{\DD}(\mm_L) 
}
\]
Here the map $\Psi_{L} : \EE_0(L) \rightarrow \OO_{L}$ is defined by $(x,y) \mapsto -x/y$, the rightmost lower horizontal arrow is multiplication by $\rho$, and the maps labeled $\,\textnormal{incl}\,$ are the obvious inclusions. Note that the horizontal and vertical outer maps are all continuous. Since $\psi_L$, $\phi$ and multiplication by $\rho$ are homeomorphisms (for $\psi_L$ one uses Proposition \ref{fund_isom}), so is $\Psi_L$. Hence $\Psi$ must be a homeomorphism onto its image. By Galois theory, $\Psi$ is surjective, so it is itself a homeomorphism.

Let $F_{\widehat{\DD}}$ be the formal group law of $\DD$. One calculates that
$$
\rho F(X,Y) = F_{\widehat{\DD}}(\rho X, \rho Y).
$$
Hence all maps in the diagram are group homomorphisms. This proves the first part of the proposition.

Now assume $6e(K/\Q_p) < p-1$, so that $v_L(p) = 6 v_K(p) = 6e(K/\Q_p)< p-1$. Now \cite[IV.6.4(b)]{silverman} implies that $\EE_1(K)$ is topologically isomorphic to $\mm_K$, and $\DD_1(L)$ to $\mm_L$. Since $\EE$ has additive reduction, we have $\wt{\EE}_{\sm}(k) \cong k^+ \cong \F_p^f$, where $f = f(K/\Q_p)$ is the inertia degree of $K/\Q_p$ and $\wt{\EE}_{\sm}$ is the smooth locus of the special fiber of $\EE$. Proposition \ref{hensel} shows we have a short exact sequence
$$
0 \rightarrow \mm_K \rightarrow \EE_0(K) \rightarrow \F_p^f \rightarrow 0.
$$
In the diagram above, the topological group $\EE_0(K)$ is mapped homomorphically into the torsion-free group $\DD_1(L)$, hence it is itself torsion-free. It follows from Remark \ref{induction} that $\EE_0(K)$ is topologically isomorphic to $\OO_K$. This proves the second part.
\end{proof}
The following corollary is worth noting, but will not be used in what follows.
\begin{cor}
Let $\EE/\OO_K$ be a Weierstrass curve with additive reduction. If $6e(K/\Q_p)<p-1$, then $\EE_0(K)$ is topologically isomorphic to $\OO_K$.
\end{cor}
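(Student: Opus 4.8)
The plan is to reduce the corollary to Proposition \ref{tocomputewith}(\ref{pgt7}) via Lemma \ref{ai_in_maxideaal}. First I would invoke Lemma \ref{ai_in_maxideaal} to produce an $\OO_K$-isomorphism $\alpha \in \PGL_3(\OO_K)$ carrying $\EE$ to a Weierstrass curve $\EE' = \alpha(\EE)$ all of whose coefficients $a_i$ lie in $\mm_K$. The map $\alpha$ is a composition of a translation and a shear of the explicit forms described in the proof of Lemma \ref{ai_in_maxideaal}, hence is defined over $\OO_K$ and induces an isomorphism of schemes $\EE \xrightarrow{\sim} \EE'$ over $\OO_K$; in particular it induces a group isomorphism $\EE(K) \xrightarrow{\sim} \EE'(K)$ which is bicontinuous (the coordinate change is polynomial with $\OO_K$-coefficients in both directions). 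Since $\alpha$ is an isomorphism of $\OO_K$-schemes, it is compatible with reduction to the special fiber, so it carries the non-singular locus to the non-singular locus and hence restricts to a topological group isomorphism $\EE_0(K) \xrightarrow{\sim} \EE'_0(K)$.

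Next I would apply Proposition \ref{tocomputewith}(\ref{pgt7}) to the curve $\EE'$: its hypotheses are exactly that $\EE'$ is given by a Weierstrass equation with all $a_i \in \mm_K$ and that $6e(K/\Q_p) < p-1$, both of which now hold. That proposition yields that $\EE'_0(K)$ is topologically isomorphic to $\OO_K$ with its usual group structure. Combining with the isomorphism $\EE_0(K) \cong \EE'_0(K)$ from the previous step gives $\EE_0(K) \cong \OO_K$ as topological groups, which is the assertion.

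The only point requiring a little care — and the closest thing to an obstacle — is verifying that the change of coordinates $\alpha$ really does respect the subgroups $\EE_0(K) \subset \EE(K)$ and their topologies; but this is a formal consequence of $\alpha$ being an isomorphism of projective $\OO_K$-schemes preserving the Weierstrass form and the origin $(0:1:0)$, together with the fact (already used repeatedly in the excerpt) that $\EE_0(K)$ is intrinsically the preimage of $\wt{\EE}_{\sm}(k)$ under reduction. No new computation with the formal group law is needed, since all the analytic work has been done in Proposition \ref{tocomputewith}. Hence the corollary follows immediately once the reduction to the case of coefficients in $\mm_K$ is in place.

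\begin{proof}
By Lemma \ref{ai_in_maxideaal}, there is an $\OO_K$-isomorphism from $\EE$ to a Weierstrass curve $\EE'$ whose coefficients all lie in $\mm_K$; such an isomorphism induces a bicontinuous group isomorphism $\EE_0(K) \xrightarrow{\sim} \EE'_0(K)$. Since $6e(K/\Q_p) < p-1$, Proposition \ref{tocomputewith}(\ref{pgt7}) applied to $\EE'$ shows that $\EE'_0(K)$ is topologically isomorphic to $\OO_K$. Therefore $\EE_0(K)$ is topologically isomorphic to $\OO_K$.
\end{proof}
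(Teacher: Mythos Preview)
Your proof is correct and follows exactly the same route as the paper: invoke Lemma \ref{ai_in_maxideaal} to replace $\EE$ by an $\OO_K$-isomorphic Weierstrass curve $\EE'$ with all $a_i \in \mm_K$, observe that $\EE_0(K)$ depends only on the $\OO_K$-isomorphism class, and then apply Proposition \ref{tocomputewith}(\ref{pgt7}). Your additional remarks justifying that $\alpha$ preserves $\EE_0$ and the topology are correct elaborations of what the paper leaves implicit.
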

\begin{proof}
The statement that $\EE_0(K)$ is topologically isomorphic to $\OO_K$ only depends on the $\OO_K$-isomorphism class of $\EE$. By Lemma \ref{ai_in_maxideaal}, there exists a Weierstrass curve $\EE'$ with $a_i \in \mm_K$ that is $\OO_K$-isomorphic to $\EE$. Now apply Proposition \ref{tocomputewith} to $\EE'$. 
\end{proof}

\section{Weierstrass curves with additive reduction over $\Z_p$}

In this section, we gather some general properties of Weierstrass curves over $\Z_p$ with additive reduction and finish the proof of theorem \ref{main}.
\begin{lem}
\label{E1}
Let $\EE/\Z_p$ be a Weierstrass curve with additive reduction. Then there exists a topological isomorphism $\chi : \widehat{\EE}(p \Z_p) \stackrel{\sim}{\rightarrow} p\Z_p$ such that for $n \in \Z_{\geq 1}$, $\chi$ identifies $\widehat{\EE}(p^n \Z_p)$ with $p^n \Z_p$.
\end{lem}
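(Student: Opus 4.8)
The plan is to construct $\chi$ as a suitable rescaling of the logarithm of the formal group $\widehat{\EE}$. Recall the formal logarithm $\log_{\widehat{\EE}}(T) = T + \sum_{n \geq 2} (c_n/n) T^n$ with $c_n \in \Z_p$ (this follows from Lemma~\ref{mult_by_p}, or directly from \cite[IV.5]{silverman}); it gives a formal-group isomorphism over $\Q_p$ between $\widehat{\EE}$ and the additive formal group. The issue is that $\log_{\widehat{\EE}}$ need not converge on all of $p\Z_p$, nor preserve $p\Z_p$, when $p$ is small, because of the denominators $n$. So the strategy is not to use $\log_{\widehat{\EE}}$ directly but to show by hand that $\widehat{\EE}(p\Z_p)$ is a torsion-free $\Z_p$-module of rank one that is complete and admits $\widehat{\EE}(p^n\Z_p)$ as a fundamental system of neighbourhoods of $0$, and then invoke the structure theory as in Remark~\ref{induction}.

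First I would record that each $\widehat{\EE}(p^n\Z_p)$ is a closed subgroup of $\widehat{\EE}(p\Z_p)$ and that the successive quotients $\widehat{\EE}(p^n\Z_p)/\widehat{\EE}(p^{n+1}\Z_p)$ are isomorphic to $\cp$: indeed the map $u \mapsto u/p^n \bmod p$ is a surjective homomorphism with kernel $\widehat{\EE}(p^{n+1}\Z_p)$, because $F(u,v) \equiv u+v \pmod{p^{n+1}}$ for $u,v \in p^n\Z_p$ (all higher-degree terms of $F$ lie in $p^{2n}\Z_p \subseteq p^{n+1}\Z_p$ once $n \geq 1$, using that the $a_i$-free generic coefficients are $p$-integral). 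Next, for $N$ large enough that $p^N\Z_p$ is inside the region where the formal logarithm converges and is an isometry — concretely $N > v_p(p)/(p-1) = 1/(p-1)$, so $N \geq 2$ suffices for every $p$, and $N=1$ for $p \geq 3$ — Proposition~\ref{fund_isom} together with \cite[IV.6.4(b)]{silverman} gives a topological isomorphism $\widehat{\EE}(p^N\Z_p) \cong p^N\Z_p$ identifying each $\widehat{\EE}(p^n\Z_p)$ with $p^n\Z_p$ for $n \geq N$. In particular $\widehat{\EE}(p^N\Z_p) \cong \Z_p$ as a topological group. Then the finite filtration
$$
\widehat{\EE}(p^N\Z_p) \subset \widehat{\EE}(p^{N-1}\Z_p) \subset \cdots \subset \widehat{\EE}(p\Z_p)
$$
has all quotients isomorphic to $\cp$, and $\widehat{\EE}(p\Z_p)$ is torsion-free (it embeds, via $\psi_K$ composed with the inclusion $\widehat{\EE}(p\Z_p) \hookrightarrow \widehat{\EE}(p\Z_p)$, into the torsion-free group $\EE_1(\Q_p)$; more simply, $p$-torsion in $\widehat{\EE}(p\Z_p)$ would be $p$-torsion in $\widehat{\EE}(\mathfrak m_{\Q_p})$, which is torsion-free by \cite[VII.3.2 or IV.3.2]{silverman}). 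Hence by Remark~\ref{induction}, $\widehat{\EE}(p\Z_p)$ is topologically isomorphic to $\Z_p$.

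Finally I would upgrade this to the statement with the filtration. Fix any topological isomorphism $\widehat{\EE}(p\Z_p) \cong \Z_p$; under it, $\widehat{\EE}(p^n\Z_p)$ is, for each $n$, a closed subgroup of $\Z_p$ of index $p^{n-1}$ (by multiplicativity of the indices of the successive quotients), hence equals $p^{n-1}\Z_p$; composing with multiplication by $p$ gives the desired $\chi$ with $\chi(\widehat{\EE}(p^n\Z_p)) = p^n\Z_p$. The one point requiring care — the main obstacle — is establishing torsion-freeness of $\widehat{\EE}(p\Z_p)$ cleanly at $p=2,3$ without circularity; the safe route is the general fact that the $\mathfrak m$-adic points of a formal group over a $p$-adically complete local ring have no $p$-torsion, applied to $\widehat{\EE}$ over $\Z_p$ (and noting $\widehat{\EE}(p\Z_p) \subseteq \widehat{\EE}(\mathfrak m_{\Z_p}) = \widehat{\EE}(p\Z_p)$, so there is nothing extra to check). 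Everything else is a routine combination of Lemma~\ref{mult_by_p}, Proposition~\ref{fund_isom}, and Remark~\ref{induction}.
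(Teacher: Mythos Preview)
Your overall architecture matches the paper's: reduce to showing $\widehat{\EE}(p\Z_p)$ is torsion-free, use the known isomorphism $\widehat{\EE}(p^N\Z_p)\cong p^N\Z_p$ for $N$ large enough, and then invoke Remark~\ref{induction}. For $p\geq 3$ this is fine, since $1>1/(p-1)$ and \cite[IV.6.4(b)]{silverman} already gives $\widehat{\EE}(p\Z_p)\cong p\Z_p$ directly.

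The gap is at $p=2$, precisely where you flag it. None of your proposed justifications for torsion-freeness works. The appeal to $\EE_1(\Q_p)$ is circular, since $\EE_1(\Q_p)\cong\widehat{\EE}(p\Z_p)$ via $\psi_{\Q_p}$. The references \cite[IV.3.2, VII.3.2]{silverman} only say that torsion in $\widehat{\EE}(\mathfrak m)$ is $p$-primary, not that it vanishes. And the ``general fact'' that $\widehat{\FF}(\mathfrak m)$ has no $p$-torsion for a formal group $\widehat{\FF}$ over a complete local ring is simply false: for $\widehat{\G}_m$ over $\Z_2$ one has $[2](T)=2T+T^2$, so $T=-2\in 2\Z_2$ is a nonzero $2$-torsion element (it corresponds to $-1\in 1+2\Z_2$). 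In particular, for a Weierstrass curve over $\Z_2$ with multiplicative reduction the conclusion of the lemma fails, so the additive-reduction hypothesis must be used.

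The paper uses it as follows. After arranging $a_i\in 2\Z_2$ via Lemma~\ref{ai_in_maxideaal}, one inspects the duplication series
\[
[2](T)=2T-a_1T^2-2a_2T^3+(a_1a_2-7a_3)T^4-\cdots
\]
and observes that for $z$ with $v_2(z)=1$ every term has $2$-adic valuation at least $2$, with the linear term having valuation exactly $2$; hence $v_2([2](z))=2$ and $[2](z)\neq 0$. This rules out $2$-torsion in $\widehat{\EE}(2\Z_2)\setminus\widehat{\EE}(4\Z_2)$, and together with $\widehat{\EE}(4\Z_2)\cong 4\Z_2$ it finishes the case $p=2$. Replacing your ``safe route'' by this explicit valuation computation would close the gap.
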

\begin{proof}
For $p>2$, this is standard; the proof may be found in \cite[IV.6.4(b)]{silverman}. We now treat the case $p=2$. By Lemma \ref{ai_in_maxideaal}, we may assume that the Weierstrass coefficients $a_i$ of $\EE$ all lie in $2\Z_2$. The multiplication by 2 on $\widehat{\EE}(2\Z_2)$ is given by the power series
\begin{equation}
\label{formalduplication}
[2](T) = F_{\widehat{\EE}}(T,T) = 2T - a_1T^2 -2a_2T^3 + (a_1a_2-7a_3)T^4 - \ldots,
\end{equation}
where $F_{\widehat{\EE}}$ is the formal group law of $\EE$. By \cite[IV.3.2(a)]{silverman}, $\widehat{\EE}(2\Z_2)/\widehat{\EE}(4\Z_2)$ is cyclic of order 2. By \cite[IV.6.4(b)]{silverman}, there exists a topological isomorphism $\widehat{\EE}(4\Z_2) \stackrel{\sim}{\rightarrow} 4\Z_2$. Hence there exists an extension
$$
0 \rightarrow 4 \Z_2 \stackrel{i}{\rightarrow} \widehat{\EE}(2\Z_2) \rightarrow \F_2 \rightarrow 0.
$$
From Theorem \ref{extensions} we see that $\widehat{\EE}(2\Z_2)$ is topologically isomorphic either to $2\Z_2$ or to $4\Z_2 \times \F_2$. Assume that the latter is the case, then there is an element $z$ of order 2 in $\widehat{\EE}(2\Z_2)$ that is not contained in $\widehat{\EE}(4\Z_2)$. For such a $z$ we have $v_2(z) = 1$, where $v_2 : \widehat{\EE}(2\Z_2) \rightarrow \Z_{\geq 1} \cup \{ \infty \}$ is the 2-adic valuation on the underlying set $2\Z_2$ of $\widehat{\EE}(2\Z_2)$. Using that in the duplication power series (\ref{formalduplication}) we have $a_i \in 2\Z_2$ for each $i$, it follows that $v_2([2](z)) = 2$, so $[2](z) \neq 0$. This is a contradiction, so there exists an isomorphism $\chi : \widehat{\EE}(2\Z_2) \stackrel{\sim}{\rightarrow} 2\Z_2$ as topological groups. From this, and from the fact that $\widehat{\EE}(2^n\Z_2)/\widehat{\EE}(2^{n+1}\Z_2) \cong \F_2$ for all $n \in \Z_{\geq 1}$ \cite[IV.3.2(a)]{silverman}, we see that $\chi$ necessarily respects the filtrations on either side.
\end{proof}

\begin{cor}
\label{E1cor}
Let $\EE/\Z_p$ be a Weierstrass curve with additive reduction. Then there exists an isomorphism $\EE_1(\Q_p) \stackrel{\sim}{\rightarrow} p\Z_p$ which for $n \in \Z_{\geq 1}$ identifies $\EE_n(\Q_p)$ with $p^n \Z_p$.
\end{cor}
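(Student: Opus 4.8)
The plan is to obtain the desired isomorphism as a composition of two isomorphisms already at our disposal. First I would invoke Proposition \ref{fund_isom} in the special case $K = \Q_p$. Since the ring of integers is then $\Z_p$ and its maximal ideal is $\mm_{\Q_p} = p\Z_p$, that proposition provides an isomorphism of topological groups $\psi_{\Q_p} : \EE_1(\Q_p) \stackrel{\sim}{\rightarrow} \widehat{\EE}(p\Z_p)$ which respects the two filtrations, i.e. which carries $\EE_n(\Q_p)$ onto $\widehat{\EE}(\mm_{\Q_p}^n)$ for each $n \geq 1$. The only point to record here is the elementary identification $\mm_{\Q_p}^n = (p\Z_p)^n = p^n\Z_p$, so that $\psi_{\Q_p}$ in fact identifies $\EE_n(\Q_p)$ with $\widehat{\EE}(p^n\Z_p)$.

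Second, I would apply Lemma \ref{E1} to the curve $\EE$, which (as $\EE/\Z_p$ has additive reduction) yields a topological isomorphism $\chi : \widehat{\EE}(p\Z_p) \stackrel{\sim}{\rightarrow} p\Z_p$ carrying $\widehat{\EE}(p^n\Z_p)$ onto $p^n\Z_p$ for each $n \geq 1$. The composite $\chi \circ \psi_{\Q_p} : \EE_1(\Q_p) \stackrel{\sim}{\rightarrow} p\Z_p$ is then a topological isomorphism, and chasing the filtration indices through the two maps shows it sends $\EE_n(\Q_p)$ first to $\widehat{\EE}(p^n\Z_p)$ and then to $p^n\Z_p$, which is exactly the claim.

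I expect no real obstacle: all the substantive work has already been carried out, namely in Lemma \ref{E1} (where the genuinely delicate case $p=2$ was disposed of by means of Proposition \ref{extensions} together with an analysis of the formal duplication series) and in Proposition \ref{fund_isom}. The only thing one must be mildly careful about is the bookkeeping of the identification $\mm_{\Q_p} = p\Z_p$, so that the two filtrations line up index-by-index; once that is noted, the corollary follows at once by composition.
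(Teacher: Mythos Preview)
Your proposal is correct and matches the paper's own proof essentially verbatim: the paper simply says the desired isomorphism is obtained by composing $\chi$ from Lemma \ref{E1} with $\psi_{\Q_p}$ from Proposition \ref{fund_isom}. Your additional remarks about the identification $\mm_{\Q_p}^n = p^n\Z_p$ and the filtration bookkeeping are fine elaborations of that one-line argument.
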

\begin{proof}
Such an isomorphism can be obtained by composing the isomorphism $\chi$ from Lemma \ref{E1} with the isomorphism $\psi_{\Q_p}$ from Proposition \ref{fund_isom}.
\end{proof}

\subsection{$p=2$}
\begin{prop}
\label{case2}
Let $\EE/\Z_2$ be a Weierstrass curve with its coefficients $a_i$ in $2\Z_2$. Then $\EE_0(\Q_2)$ is topologically isomorphic to $\Z_2$ if $a_1 + a_3 \equiv 0 \pmod{4}$, and to $2\Z_2 \times \F_2$ otherwise.
\end{prop}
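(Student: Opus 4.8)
The plan is to transport everything to the formal group of $\EE$, where the results already established apply directly. First, since $a_i\in 2\Z_2=\mm_{\Q_2}$, Proposition~\ref{tocomputewith}(\ref{Fhomeo}) (with $K=\Q_2$) gives a topological isomorphism $\Psi\colon\EE_0(\Q_2)\stackrel{\sim}{\rightarrow}\widehat{\EE}(\Z_2)$, $(x,y)\mapsto -x/y$, so it is enough to identify the topological group $\widehat{\EE}(\Z_2)$. Its subgroup $\widehat{\EE}(2\Z_2)$ has underlying set $2\Z_2\subset\Z_2$, and since the degree-$\geq 2$ coefficients of the formal group law have positive weight and hence (as $a_i\in 2\Z_2$) lie in $2\Z_2$, reduction of the underlying element modulo $2$ is a surjective homomorphism $\widehat{\EE}(\Z_2)\to\F_2$ with kernel $\widehat{\EE}(2\Z_2)$. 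Using the topological isomorphism $\chi\colon\widehat{\EE}(2\Z_2)\stackrel{\sim}{\rightarrow}2\Z_2$ of Lemma~\ref{E1}, I obtain a short exact sequence of topological groups
$$
0\rightarrow 2\Z_2\stackrel{i}{\rightarrow}\widehat{\EE}(\Z_2)\rightarrow\F_2\rightarrow 0,\qquad i=(\text{inclusion})\circ\chi^{-1},
$$
in which $i$ is a topological embedding (the topology on $\widehat{\EE}(\Z_2)$ is that of $\Z_2$, and $\widehat{\EE}(2\Z_2)$ carries the subspace topology).

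Next I would feed this into Corollary~\ref{direct}: $\widehat{\EE}(\Z_2)$ is topologically isomorphic to $\Z_2$ when $v_2\bigl(i^{-1}(2z)\bigr)=1$ for the elements $z\in\widehat{\EE}(\Z_2)$ outside $i(2\Z_2)$ — that is, for units $z\in\Z_2^{\ast}$ — and is topologically isomorphic to $2\Z_2\times\F_2$ otherwise. Inside $\widehat{\EE}(\Z_2)$ one has $2z=F_{\widehat{\EE}}(z,z)=[2](z)$, computed in $\Z_2$; and since $\chi$ matches $\widehat{\EE}(2^n\Z_2)$ with $2^n\Z_2$ for all $n$, it preserves valuations, so $v_2\bigl(i^{-1}(2z)\bigr)=v_2\bigl([2](z)\bigr)$. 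Corollary~\ref{direct} moreover says this valuation does not depend on the unit $z$, so it suffices to take $z=1$ (the series $[2](1)=F_{\widehat{\EE}}(1,1)$ converges because $a_i\in 2\Z_2$). Hence the whole proposition reduces to the congruence
$$
[2](1)\equiv 2+a_1+a_3\pmod 4 .
$$
Granting this, $v_2([2](1))=1$ exactly when $a_1+a_3\equiv 0\pmod 4$ and $v_2([2](1))\geq 2$ exactly when $a_1+a_3\equiv 2\pmod 4$ (recall $a_1+a_3\in 2\Z_2$), and Corollary~\ref{direct} then yields the two cases of the proposition.

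To establish the congruence I would write $[2](T)=\sum_{n\geq 1}b_nT^n$ with $b_n\in\Z_2$, and read off from the expansion~(\ref{formalduplication}) that $b_1=2$, $b_2=-a_1$, $b_3=-2a_2$, $b_4=a_1a_2-7a_3$, which are congruent modulo $4$ to $2$, $a_1$, $0$, $a_3$ respectively (using $a_i\in 2\Z_2$, so $-a_1\equiv a_1$, $-2a_2\equiv 0$, $a_1a_2\equiv 0$ and $-7a_3\equiv a_3$). It then remains to check $b_n\in 4\Z_2$ for all $n\geq 5$, for which I would invoke Lemma~\ref{mult_by_p}: $b_n$ is homogeneous of weight $n-1$ in $a_1,a_2,a_3,a_4,a_6$, and $2\mid b_n$ when $n$ is odd. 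For odd $n\geq 5$ write $b_n=2c_n$ with $c_n$ homogeneous of weight $n-1\geq 4$; every monomial of $c_n$ is divisible by some $a_i\in 2\Z_2$, so $c_n\in 2\Z_2$ and $b_n\in 4\Z_2$. For even $n\geq 6$ the weight $n-1$ is an odd number $\geq 5$, but the set of weights of the $a_i$ is $\{1,2,3,4,6\}$, which contains no such number; hence every monomial of $b_n$ involves at least two of the $a_i$ and therefore lies in $4\Z_2$. Summing over $n$ (the tail converges $2$-adically) gives $[2](1)\equiv 2+a_1+a_3\pmod 4$.

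The hard part is really only the last paragraph: once the formal-group reductions are set up, the content is the mod-$4$ evaluation of $[2](1)$, and the delicate point is the claim $b_n\in 4\Z_2$ for $n\geq 5$, which relies both on the weight grading of Lemma~\ref{mult_by_p} and on the fact that there is no Weierstrass coefficient of weight $5$. Everything else is a formal application of Proposition~\ref{tocomputewith}, Lemma~\ref{E1}, and Corollary~\ref{direct}.
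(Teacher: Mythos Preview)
Your proof is correct and follows essentially the same approach as the paper: transport to the formal group via Proposition~\ref{tocomputewith}, apply Corollary~\ref{direct} to the extension $0\to 2\Z_2\to\widehat{\EE}(\Z_2)\to\F_2\to 0$, and then analyze $v_2([2](z))$ for units $z$ using exactly the same weight argument from Lemma~\ref{mult_by_p} (in particular the observation that there is no $a_5$, so coefficients of odd weight $\geq 5$ contain no linear part). The only difference is cosmetic: the paper keeps a general unit $z$ and uses $z\equiv z^2\equiv z^4\pmod 2$ at the end, whereas you observe that Corollary~\ref{direct} already forces the answer to be uniform in $z$ and so specialize to $z=1$; this is a minor streamlining, not a different method.
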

\begin{proof}
Proposition \ref{hensel} shows that there is a short exact sequence
$$
0 \rightarrow \EE_1(\Q_2) \rightarrow \EE_0(\Q_2) \rightarrow \F_2 \rightarrow 0.
$$
By Lemma \ref{E1}, we have $\EE_1(\Q_2) \cong 2\Z_2$, so Proposition \ref{extensions} implies that $\EE_0(\Q_2)$ is topologically isomorphic either to $\Z_2$ or to $2 \Z_2 \times \F_2$. 

Let $[2](T) \in \OO_K[[T]]$ be the formal duplication formula (\ref{formalduplication}) on $\EE$. Let $\Psi$ be the map from Proposition \ref{tocomputewith}. Since $\Psi$ is an isomorphism of topological groups, we have for all $P \in \EE_0(\Q_2)$:
\begin{equation}
\label{truth2}
\Psi(2P) = [2](\Psi(P)).
\end{equation}
By Corollary \ref{direct}, we have $\EE_0(\Q_2) \cong \Z_2$ if and only if for all $P \in \EE_0(\Q_2) - \EE_1(\Q_2)$ we have $2P = \EE_1(\Q_2) - \EE_2(\Q_2)$, which by (\ref{truth2}) is true if and only if for all $z \in \widehat{\EE}(\Z_2) - \widehat{\EE}(2\Z_2)$ we have $v_2([2](z)) = 1$, where $v_2 : \widehat{\EE}(\Z_2) \rightarrow \Z_{\geq 0} \cup \{ \infty \}$ is the 2-adic valuation on the underlying set $\Z_2$ of $\widehat{\EE}(\Z_2)$. This condition may be checked using the duplication power series
$$
[2](T) = 2T - a_1T^2 -2a_2T^3 + (a_1a_2-7a_3)T^4 - \ldots = \sum_{i=1}^{\infty} b_i T^i.
$$
In deciding whether $v_2([2](z)) = 1$ for $z \in \widehat{\EE}(\Z_2) - \widehat{\EE}(2\Z_2)$, we do not need to consider those parts of terms whose coefficients have valuation $\geq 2$. The non-linear parts of each coefficient $b_i$ will contribute only terms with valuation $\geq 2$, so may ignore these and keep only the linear parts. The terms $b_i z^i$ with $i$ odd we may discard altogether; by Lemma \ref{mult_by_p}, all their coefficients have valuation $\geq 2$. Finally, we may discard all terms $b_i z^i$ with $i$ even and $\geq 6$: a polynomial in $\Z[a_1,\ldots,a_6]$ whose weight is odd and at least 5 does not contain a linear term (there being no $a_5$), so the terms involving $z^6, z^8, z^{10}, \ldots$ will have valuation $\geq 2$. 

We thus get that, if $z \in \widehat{\EE}(\Z_2) - \widehat{\EE}(2\Z_2)$,
$$
v_2([2](z)) = 1 ~~~ \Leftrightarrow ~~~ v_2(2z - a_1z^2 - 7a_3z^4) = 1.
$$
This is true for all $z \in \widehat{\EE}(\Z_2) - \widehat{\EE}(2\Z_2)$ if and only if:
$$
v_2(z - \frac{a_1}{2}z^2 - \frac{7a_3}{2}z^4) = 0  ~~~\Leftrightarrow ~~~ a_1 + 7a_3 \equiv 0 \pmod{4}   ~~~\Leftrightarrow ~~~ a_1 + a_3 \equiv 0 \pmod{4}
$$
since $z \equiv z^2 \equiv z^4 \pmod{2}$. This proves the proposition.
\end{proof}

\subsection{$p=3$}
\begin{prop}
\label{case3}
Let $\EE/\Z_3$ be a Weierstrass curve with its coefficients $a_i$ in $3\Z_3$. Then $\EE_0(\Q_3)$ is topologically isomorphic to $\Z_3$ if $a_2 \not \equiv 6 \pmod{9}$, and to $3\Z_3 \times \F_3$ otherwise.
\end{prop}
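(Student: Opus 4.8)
The plan is to run the proof of Proposition~\ref{case2} almost verbatim, with $2$ replaced by $3$ and the duplication series replaced by the triplication series $[3]$. First I would use Proposition~\ref{hensel} to get the short exact sequence $0 \to \EE_1(\Q_3) \to \EE_0(\Q_3) \to \F_3 \to 0$, and Lemma~\ref{E1} (or Corollary~\ref{E1cor}) to identify $\EE_1(\Q_3)$ topologically with $3\Z_3$ in a way that carries $\EE_n(\Q_3)$ to $3^n\Z_3$. By Proposition~\ref{extensions}, $\EE_0(\Q_3)$ is then topologically isomorphic either to $\Z_3$ or to $3\Z_3 \times \F_3$. Using the isomorphism $\Psi$ of Proposition~\ref{tocomputewith}, which satisfies $\Psi(3P) = [3](\Psi(P))$, Corollary~\ref{direct} reduces the proposition to deciding whether $v_3([3](z)) = 1$ for every $z \in \widehat{\EE}(\Z_3) - \widehat{\EE}(3\Z_3)$, that is, for every unit $z \in \Z_3^{\times}$.

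The main work is to truncate $[3](T) = \sum_i b_i T^i$ to the part that matters modulo $9$ after substituting a unit. Since $b_1 = 3$, each term $b_i z^i$ already has valuation $\ge 1$, so the issue is whether $[3](z) \in 9\Z_3$ can occur. I would discard terms as follows. For $i \ge 2$ not divisible by $3$, Lemma~\ref{mult_by_p} gives $3 \mid b_i$, and the cofactor $b_i/3$ is homogeneous of weight $i-1 \ge 1$, hence lies in the ideal $(a_1,\dots,a_6)$; evaluated at $a_j \in 3\Z_3$ this forces $b_i z^i \in 9\Z_3$. The non-linear (in the $a_j$) part of any $b_i$ lies in $(a_1,\dots,a_6)^2$, so it too contributes only terms in $9\Z_3$. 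For $i$ divisible by $3$ with $i \ge 6$, the weight $i-1 \in \{5,8,11,14,\dots\}$ is not the index of any Weierstrass coefficient (there is no $a_5$), so $b_i$ has no linear part at all. Thus only $b_1 = 3$ and $b_3$ survive, and a short computation from the displayed expansion of $F$ (using $[3](T) = F([2](T),T)$) gives $b_3 = a_1^2 - 8a_2$, whose quadratic part $a_1^2$ again lies in $9\Z_3$. Hence $[3](z) \equiv 3z - 8a_2 z^3 \pmod 9$ for every unit $z$.

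To finish: a unit $z$ satisfies $z^3 \equiv z \pmod 3$, so $a_2 z^3 \equiv a_2 z \pmod 9$ since $a_2 \in 3\Z_3$; together with $-8 \equiv 1 \pmod 9$ this yields $[3](z) \equiv (3 + a_2) z \pmod 9$. As $z$ is a unit, $v_3([3](z)) = v_3(3 + a_2)$, which equals $1$ for all units $z$ precisely when $3 + a_2 \not\equiv 0 \pmod 9$, i.e.\ when $a_2 \not\equiv 6 \pmod 9$. By Corollary~\ref{direct}, $\EE_0(\Q_3)$ is then topologically isomorphic to $\Z_3$, and otherwise to $3\Z_3 \times \F_3$. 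The step I expect to be the main obstacle is the truncation in the second paragraph: one must check carefully that, beyond $3z$, the only surviving contribution modulo $9$ is the $a_2$-term of $b_3 z^3$, which requires combining the divisibility statement of Lemma~\ref{mult_by_p} (for indices prime to $3$), the weight identity $\wht(b_i) = i-1$ together with the absence of an $a_5$ (for the remaining multiples of $3$), and the fact that $a_j \in 3\Z_3$ kills all monomials of degree $\ge 2$ modulo $9$; pinning down the coefficient of $a_2$ in $b_3$ (namely $-8 \equiv 1 \pmod 3$, which is what makes the critical residue $6$ rather than $3$) is the one genuinely computational point.
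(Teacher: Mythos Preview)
Your proposal is correct and follows essentially the same route as the paper's own proof: set up the extension via Proposition~\ref{hensel} and Lemma~\ref{E1}, invoke Corollary~\ref{direct} through the isomorphism $\Psi$ of Proposition~\ref{tocomputewith}, and then truncate $[3](T)$ using Lemma~\ref{mult_by_p} together with the weight identity and the absence of $a_5$ to reduce to $3z - 8a_2 z^3$. Your final simplification $[3](z) \equiv (3+a_2)z \pmod 9$ is a slightly cleaner repackaging of the paper's concluding computation, but the argument is the same.
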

\begin{proof}
We proceed as in the proof of Proposition \ref{case2}, using the formal triplication formula:
\begin{equation}
\label{formaltriplication}
[3](T) = 3T - 3a_1T^2 +(a_1^2-8a_2)T^3 + (12a_1a_2 - 39a_3)T^4 + \ldots = \sum_{i=1}^{\infty} b_i T^i.
\end{equation}
We consider the usual exact sequence for $\EE_0(\Q_3)$:
$$
0 \rightarrow \EE_1(\Q_3) \rightarrow \EE_0(\Q_3) \rightarrow \F_3 \rightarrow 0.
$$
We see from $\EE_1(\Q_3) \cong 3\Z_3$ and Corollary \ref{direct} that $\EE_0(\Q_3)$ is topologically isomorphic to $3\Z_3 \times \F_3$ if and only if for all elements $z \in \widehat{\EE}(\Z_3) - \widehat{\EE}(3\Z_3)$, $[3](z)$ has valuation greater than 1. On the other hand, $\EE_0(\Q_3)$ is topologically isomorphic to $\Z_3$ if for all such $z$, the valuation of $[3](z)$ is 1. Reasoning as in the proof of Proposition \ref{case2}, we see that we may ignore all terms of degree not equal to 1 or a multiple of 3 since their coefficients are divisible by 3 and have positive weight. Also we may ignore the terms of degree both equal to a multiple of 3 and greater than 3, since their coefficients do not contain parts that are linear in $a_1,\ldots,a_6$. Finally, we may ignore the non-linear part of the term of degree 3. We see that for $z \in \widehat{\EE}(\Z_3) - \widehat{\EE}(3\Z_3)$, we have:
$$
v_3([3](z)) = 1 ~~~\Leftrightarrow ~~~ v_3(3z - 8a_2z^3) = 1.
$$
This happens for all such $z$ if and only if:
$$
\ord_3(z - \frac{8a_2}{3}z^3 ) = 0  ~~~\Leftrightarrow ~~~ 1 - \frac{8a_2}{3} \not \equiv 0 \pmod{3} ~~~\Leftrightarrow ~~~ a_2 \not \equiv 6 \pmod{9}
$$
since $z \equiv z^3  \pmod{3}$. This proves the proposition.
\end{proof}

\subsection{$p=5$}
\begin{prop}
\label{case5}
Let $\EE/\Z_5$ be a Weierstrass curve with its coefficients $a_i$ in $5\Z_5$. Then $\EE_0(\Q_5)$ is topologically isomorphic to $\Z_5$ if $a_4 \not \equiv 10 \pmod{25}$, and to $5\Z_5 \times \F_5$ otherwise.
\end{prop}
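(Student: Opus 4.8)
The plan is to repeat the argument of Propositions \ref{case2} and \ref{case3} with $p=5$, using the generic formal multiplication-by-$5$ series $[5](T)=\sum_i b_iT^i$ in place of the duplication and triplication series. Proposition \ref{hensel} again yields a short exact sequence
$$
0\rightarrow \EE_1(\Q_5)\rightarrow \EE_0(\Q_5)\rightarrow \F_5\rightarrow 0,
$$
and $\EE_1(\Q_5)\cong 5\Z_5$ by Corollary \ref{E1cor}. Combining Corollary \ref{direct} with the identity $\Psi(5P)=[5](\Psi(P))$ supplied by Proposition \ref{tocomputewith}(\ref{Fhomeo}), I get that $\EE_0(\Q_5)$ is topologically isomorphic to $\Z_5$ if and only if $v_5([5](z))=1$ for every $z\in\widehat{\EE}(\Z_5)-\widehat{\EE}(5\Z_5)$, and is topologically isomorphic to $5\Z_5\times\F_5$ otherwise; here $v_5$ is the $5$-adic valuation on the underlying set $\Z_5$ of $\widehat{\EE}(\Z_5)$, and $\widehat{\EE}(\Z_5)-\widehat{\EE}(5\Z_5)$ is the set of units $z\in\Z_5^{\ast}$.

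The second step is to strip $[5](T)$ of everything irrelevant to whether $v_5([5](z))=1$ for a unit $z$: a term $b_iz^i$ may be dropped as soon as $b_i$, after the $a_j$ are specialized into $5\Z_5$, has valuation $\geq 2$. By Lemma \ref{mult_by_p}, $5\mid b_i$ whenever $5\nmid i$, and $b_i$ is homogeneous of weight $i-1$, so it has no constant term once $i\geq 2$; hence all $b_iz^i$ with $5\nmid i$ and $i\geq 2$ contribute valuation $\geq 2$, leaving the linear term $b_1z=5z$. Among $i\in\{5,10,15,\dots\}$ the coefficient $b_i$ has weight $i-1$, and because there is no variable $a_5$ the only one of these weights realized by a single variable $a_j$ is $i-1=4$; so $b_5$ is the only such $b_i$ with a part linear in the $a_j$, namely $c\,a_4$ for some $c\in\Z$. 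It follows that for every unit $z$,
$$
v_5([5](z))=1 \iff v_5(5z+c\,a_4z^5)=1,
$$
so the whole matter reduces to computing $c$ modulo $5$.

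That computation is the single real calculation and the step I expect to be the main obstacle, though a short one: either expand $[5](T)=F([4](T),T)$ modulo $T^6$ and modulo the ideal of products $a_ia_j$, or, more cleanly, set $a_1=a_2=a_3=a_6=0$ and work to first order in $a_4$. In the latter setting the invariant differential is $\omega=(1+2a_4T^4+\cdots)\,dT$, the formal logarithm is $\lambda(T)=T+\tfrac{2a_4}{5}T^5+\cdots$, and hence
$$
[5](T)=\lambda^{-1}\!\bigl(5\lambda(T)\bigr)=5T+2a_4\bigl(1-5^4\bigr)T^5+\cdots,
$$
so $c=-1248\equiv 2\pmod 5$.

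Finally I finish as in the previous cases. Since $a_4\in 5\Z_5$ we may write $5z+c\,a_4z^5=5\bigl(z+\tfrac{c\,a_4}{5}z^5\bigr)$ with $\tfrac{c\,a_4}{5}\in\Z_5$, so $v_5(5z+c\,a_4z^5)=1$ iff $v_5\bigl(z+\tfrac{c\,a_4}{5}z^5\bigr)=0$; as $z$ is a unit this equals $v_5\bigl(1+\tfrac{c\,a_4}{5}z^4\bigr)$, and $z^4\equiv 1\pmod 5$ collapses the condition, uniformly in $z$, to $1+\tfrac{c\,a_4}{5}\not\equiv 0\pmod 5$. Since $c\equiv 2\pmod 5$ this says exactly $a_4\not\equiv 10\pmod{25}$. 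Therefore $\EE_0(\Q_5)$ is topologically isomorphic to $\Z_5$ when $a_4\not\equiv 10\pmod{25}$ and to $5\Z_5\times\F_5$ otherwise, as claimed.
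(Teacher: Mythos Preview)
Your proof is correct and follows essentially the same route as the paper's: set up the extension, invoke Corollary~\ref{direct}, strip the multiplication-by-$5$ series down to $5T+c\,a_4T^5$ using Lemma~\ref{mult_by_p} and the weight argument, and reduce to a congruence on $a_4$. The one pleasant difference is that you extract the constant $c=-1248$ via the formal logarithm $\lambda(T)=T+\tfrac{2a_4}{5}T^5+\cdots$ and $[5]=\lambda^{-1}\circ(5\lambda)$, whereas the paper simply quotes the coefficient from a direct expansion; your method is cleaner and explains the number rather than merely reporting it.
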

\begin{proof}
For simplicity, we give the formal multiplication by 5 power series in the case where $a_1,a_2,a_3$ are zero:
\begin{equation}
\label{formmultby5}
[5](T) = 5T - 1248a_4 T^5 + \ldots = \sum_{i=1}^{\infty} b_i T^i
\end{equation}
This formula suffices for our purposes, since the same arguments as in the proofs of Propositions \ref{case2} and \ref{case3} show that the terms that are canceled by setting $a_1=a_2=a_3=0$ could have been ignored anyway.

We apply Corollary \ref{direct} to:
$$
0 \rightarrow 5\Z_5 \rightarrow \EE_0(\Q_5) \rightarrow \F_5 \rightarrow 0.
$$
In (\ref{formmultby5}) we may ignore terms of degree not equal to 1 or 5, by the same reasoning as in the proofs of Propositions \ref{case2} and \ref{case3}. We see that for $z \in \widehat{\EE}(\Z_5) - \widehat{\EE}(5\Z_5)$ we have:
$$
\ord_5([5](z)) = 1 ~~~\Leftrightarrow ~~~ \ord_5(5z - 1248a_4z^5) = 1.
$$
This happens for all such $z$ if and only if:
$$
\ord_5(z - \frac{1248a_4}{5}z^5 ) = 0  ~~~\Leftrightarrow ~~~ 1 - \frac{1248a_4}{5} \not \equiv 0 \pmod{5} ~~~\Leftrightarrow ~~~ a_4 \not \equiv 10 \pmod{25}
$$
since $z \equiv z^5  \pmod{5}$. This proves the proposition.
\end{proof}

\subsection{$p=7$}
\begin{prop}
\label{case7}
Let $\EE/\Z_7$ be a Weierstrass curve with its coefficients $a_i$ in $7\Z_7$. Then $\EE_0(\Q_7)$ is topologically isomorphic to $\Z_7$ if $a_6 \not \equiv 14 \pmod{49}$, and to $7\Z_7 \times \F_7$ otherwise.
\end{prop}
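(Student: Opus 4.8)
The plan is to follow the template of Propositions~\ref{case2}, \ref{case3} and~\ref{case5}. By Proposition~\ref{hensel} and Corollary~\ref{E1cor} there is a short exact sequence
$$
0 \rightarrow 7\Z_7 \rightarrow \EE_0(\Q_7) \rightarrow \F_7 \rightarrow 0,
$$
and via the isomorphism $\Psi$ of Proposition~\ref{tocomputewith} the group law of $\EE_0(\Q_7)$ is carried to that of $\widehat{\EE}(\Z_7)$, with multiplication by $7$ becoming the formal multiplication-by-$7$ power series $[7] = \sum_i b_i T^i \in \Z_7[[T]]$. Corollary~\ref{direct} then says that $\EE_0(\Q_7)$ is topologically isomorphic to $\Z_7$ exactly when $v_7([7](z)) = 1$ for every $z \in \widehat{\EE}(\Z_7) - \widehat{\EE}(7\Z_7)$, and is topologically isomorphic to $7\Z_7 \times \F_7$ otherwise.

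Next I would truncate $[7](T)$ to the terms that matter when $z$ is a unit, exactly as before. By Lemma~\ref{mult_by_p}(\ref{p_deelt_bn}), for $7 \nmid i$ the coefficient $b_i$ is divisible by $7$ and, by~(\ref{gewicht_bn}), has positive weight, so $b_i z^i$ has valuation $\geq 2$; for $i = 7k$ with $k \geq 2$ the weight $7k-1$ is at least $13 > 6$, hence (there being no $a_j$ with $j \geq 7$) $b_i$ contains no linear monomial, so with all $a_j \in 7\Z_7$ this term too has valuation $\geq 2$. In degree $7$ the coefficient $b_7$ has weight $6$, so its linear part is $c_7 a_6$ for a single integer $c_7$ (the only weight-$6$ monomial in one variable being $a_6$), the rest being nonlinear and thus again contributing valuation $\geq 2$. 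It follows that for $z \in \widehat{\EE}(\Z_7) - \widehat{\EE}(7\Z_7)$,
$$
v_7([7](z)) = 1 \quad \Leftrightarrow \quad v_7\!\left(7z + c_7 a_6 z^7\right) = 1 \quad \Leftrightarrow \quad v_7\!\left(z + \frac{c_7 a_6}{7} z^7\right) = 0,
$$
and writing $a_6 = 7m$ and using $z \equiv z^7 \pmod 7$ this holds, for all such $z$ simultaneously, if and only if $1 + c_7 m \not\equiv 0 \pmod 7$, i.e. if and only if $c_7 m \not\equiv 6 \pmod 7$. Matching this with the claimed dividing line $a_6 \not\equiv 14 \pmod{49}$, that is $m \not\equiv 2 \pmod 7$, is then equivalent to the single congruence $c_7 \equiv 3 \pmod 7$.

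The one genuinely new ingredient --- and the main obstacle --- is therefore the computation of $c_7$, the coefficient of $a_6 T^7$ in the generic multiplication-by-$7$ law, which requires carrying the iteration $[7](T) = F([6](T),T)$ out to degree $7$ and is appreciably more laborious than the degree $2$, $3$, $5$ computations of the earlier cases. Following the device used in Proposition~\ref{case5}, I would set $a_1 = a_2 = a_3 = a_4 = 0$ from the start (the monomials this removes being among those already discarded), reducing the task to $[7](T) = 7T + c_7 a_6 T^7 + \ldots$; a direct calculation --- for instance via the formal logarithm, using that $\omega = (1 + 3 a_6 T^6 + \ldots)\, dT$ for $y^2 = x^3 + a_6$ --- gives $c_7 = 3(1 - 7^6) \equiv 3 \pmod 7$, which completes the proof.
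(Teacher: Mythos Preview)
Your argument is correct and follows the paper's template exactly: the same short exact sequence, the same appeal to Corollary~\ref{direct}, and the same truncation of $[7](T)$ to the linear term and the $a_6 T^7$ term. The only difference is in how you obtain the coefficient of $a_6 T^7$: the paper simply records the value $-352944$ (presumably from iterating $F$ directly), whereas you compute it via the formal logarithm for $y^2 = x^3 + a_6$, getting $c_7 = 3(1-7^6) = -352944 \equiv 3 \pmod 7$. This is a pleasant shortcut that avoids the heavier iteration and makes the congruence $c_7 \equiv 3 \pmod 7$ transparent; otherwise the two proofs are identical. One tiny wording quibble: your sentence ``for $7 \nmid i$ the coefficient $b_i$ \ldots\ has positive weight'' is literally false for $i=1$ (weight zero), but you clearly mean $i \geq 2$ there, and you do retain the $7z$ term in the final formula.
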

\begin{proof}
For simplicity, we give the formal multiplication by 7 power series with $a_1,a_2,a_3$ set to zero:
\begin{equation}
\label{formmultby7}
[7](T) = 7T - 6720a_4T^5 - 352944a_6 T^7 + \ldots
\end{equation}
As before, the terms that have disappeared as a result could have been ignored anyway.

We apply Corollary \ref{direct} to:
$$
0 \rightarrow 7\Z_7 \rightarrow \EE_0(\Q_7) \rightarrow \F_7 \rightarrow 0,
$$
In (\ref{formmultby7}) we may ignore terms of degree not equal to 1 or 7, by the same reasoning as in the proofs of Propositions \ref{case2} and \ref{case3}. We see that for $z \in \widehat{\EE}(\Z_7) - \widehat{\EE}(7\Z_7)$ we have:
$$
\ord_7([7](z)) = 1 ~~~\Leftrightarrow ~~~ \ord_7(7z - 352944a_6z^7) = 1.
$$
This happens if and only if:
$$
\ord_7(z - \frac{352944a_6}{7}z^7 ) = 0  ~~~\Leftrightarrow ~~~ 1 - \frac{352944a_6}{7} \not \equiv 0 \pmod{7} ~~~\Leftrightarrow ~~~ a_6 \not \equiv 14 \pmod{49}
$$
since $z \equiv z^7  \pmod{7}$. This proves the proposition.
\end{proof}

\subsection{The proof of Theorem \ref{main}}
\label{proofofmain}
We are now ready to derive Theorem \ref{main} from our previous results.

Let $E/\Q_p$ and $a_1,\ldots,a_6 \in p \Z_p$ be as in the statement of the theorem. Then the Weierstrass curve 
$$
y^2 + a_1 xy + a_3 y = x^3 + a_2 x^2 + a_4 x + a_6,
$$
over $\Z_p$ defines a minimal Weierstrass model of $E$. The theorem follows by applying to $\EE$ part \ref{pgt7} of Proposition \ref{tocomputewith} if $p>7$, or one of Propositions \ref{case2}--\ref{case7} if $p \leq 7$.

\section{Examples}
\label{examples}

In this section, we have collected some examples of elliptic curves over $\Q_p$ with additive reduction, such that their points of good reduction contains a $p$-torsion point. In particular, all curves and torsion points are defined over $\Q$. The fact that they possess a $p$-torsion point of good reduction can be verified using the appropriate result from the previous section. (Note that these result do not say when the $p$-torsion points will be defined over $\Q$.)

\begin{ex}\upshape
The elliptic curve
$$
E_2 : y^2 - 2 y = x^3 - 2
$$
has additive reduction at 2, and its 2-torsion point $(1,1)$ is of good reduction. 
\end{ex}

\begin{ex}\upshape
The elliptic curve
$$
E_3 : y^2 = x^3 - 3x^2 + 3x
$$
has additive reduction at 3, and its 3-torsion point $(1,1)$ is of good reduction. 
\end{ex}

\begin{ex}\upshape
The elliptic curve
$$
E_5 : y^2 - 5 y = x^3 + 20x^2 - 15x
$$
has additive reduction at 5, and its 5-torsion point $(1,-1)$ is of good reduction. 
\end{ex}

\begin{ex}\upshape
The elliptic curve
$$
E_7 : y^2 + 7xy  -28y = x^3 + 7x - 35
$$
has additive reduction at 7, and its 7-torsion point $(2,1)$ is of good reduction. 
\end{ex}

\section{Acknowledgements}
It is a pleasure to thank Ronald van Luijk and Sir Peter Swinnerton-Dyer for many useful remarks.

\bibliography{formal1}

\end{document}